\DeclareMathOperator{\dom}{dom}
\title{$\mathsf{RT}_2^2$ does not imply $\mathsf{WKL}_0$}
\keywords{recursion theory, computability theory, reverse mathematics, Ramsey's theorem, weak Konig lemma,
Mathias forcing}
\subjclass{03F35, 03C62, 03D30, 03D80}
\author{Jiayi Liu }
\address{Department of Mathematics, Central South University, South campus, south dormitory No. 6 Room. 619,
Changsha, 410083, China}
\email{g.jiayi.liu@gmail.com}
\thanks{
The present version this paper depends heavily on a write-up composed by Denis Hirschfeldt and Damir Dzhafarov. Therefore I am greatly indebted to them for lending their draft to me freely. I'm also very grateful to them for reviewing my paper with great effort and patience. I'd also like to thank to Peter Cholak for his review of my paper.
}
\thanks{I am especially grateful to Liang Yu for his invitation for me to attend The 2011 International Workshop of Logic meeting held in ZheJiang Normal University which made this paper known to others. I'm indebted to Chi Tat Chong and Yue Yang for their help in applying for the Computational Prospects of Infinity II: AII Graduate Summer School (2011), although due to personal reasons I'm not able to fulfill the journey.
I'm also grateful to Antonio Montalb$\acute{a}$n for his invitation for me to attend the 2011's Reverse Mathematics Workshop meeting held in University of Chicago.}
\thanks{I'm supported by ZheJiang Normal University Department of Mathematics and NanJing University Department of Mathematics during my journey to ZheJiang Normal University. I wish to thank these institutions. }
\thanks{The author's temporal but legal name is Lu Liu.}
\newcommand{\uhr}{\upharpoonright}
\newcommand{\sub}[1]{_{\textrm{\tiny{\fontfamily{cmr}\selectfont #1}}}}
\newtheorem{theorem}{Theorem}[section]
\newtheorem{lemma}[theorem]{Lemma}
\newtheorem*{Ramsey's theorem}{Ramsey's theorem}
\newtheorem*{claim}{Claim}
\newtheorem{corollary}[theorem]{Corollary}
\theoremstyle{definition}
\newtheorem{definition}[theorem]{Definition}
\newtheorem{fact}[theorem]{Fact}
\begin{document}

\begin{abstract}
We prove that $\mathsf{RCA}_0+\mathsf{RT}_2^2\not\rightarrow \mathsf{WKL}_0$ by showing that for any set $C$ not of \textrm{PA}-degree and any set $A$, there exists an infinite subset $G$ of $A$ or $\overline{A}$, such that $G\oplus C$ is also not of \textrm{PA}-degree.

\end{abstract}

\maketitle

\section{Introduction}
\label{sec1}

Reverse mathematics studies the proof theoretic strength of various second order arithmetic
statements. Several statements are so important and fundamental that they serve as level lines. Many mathematical theorems are found to be equivalent to these statements and they are unchanged under small perturbations of themselves. The relationships between
these statements and "other" statements draw large attention. $\mathsf{WKL}_0$ is one of these statements. $\mathsf{WKL}_0$ states that every infinite binary
tree admits an infinite path. It is well known that as a second order arithmetic statement, $\mathsf{WKL}_0$ is equivalent to the
statement that for any set $C$ there exists $B\gg C$, where $B\gg C$ means $B$ is of \textrm{PA}-degree relative to $C$.
A good survey of reverse mathematics is \cite{simpson1999subsystems} or \cite{friedman1974some}, \cite{friedman1976systems}. One of the second order
arithmetic statements close to $\mathsf{WKL}_0$ is $\mathsf{RT}_2^2$.

\begin{definition}
Let $[X]^k$ denote $\{F\subseteq X: |F|=k\}$. A k-coloring $f$ is a function, $[X]^n\rightarrow \{1,2,\ldots, k\}$. A
set $H\subseteq [X]^k$ is homogeneous for $f$ iff $f$ is constant on $[H]$. A stable coloring $f$ is a
%"coloring" to "2-coloring","\mathbb{N}" to "[\mathbb{N}]^2"%
2-coloring of $[\mathbb{N}]^2$ such that $(\forall n\in\mathbb{N})(\exists N)(\forall m>N)$
$f(\{m,n\})=f(\{N,n\})$. For a stable coloring $f$, $f_1=\{n\in\mathbb{N}: (\exists N)(\forall
m>N),f(m,n)=1\}$, $f_2=\mathbb{N}-f_1$.
\end{definition}

\begin{Ramsey's theorem}[Ramsey \cite{ramsey1930problem}]
 For any n and k, every k-coloring of $[\mathbb{N}]^n$ admits an infinite homogeneous set.
 \end{Ramsey's theorem}

Let $\mathsf{RT}_k^n$ denote the Ramsey's theorem for k-coloring of $[\mathbb{N}]^n$. And $\mathsf{SRT}_k^2$ denotes the Ramsey's theorem
restricted to stable coloring of pair.

 Jockusch \cite{jockusch1972ramsey} showed that for $n>2$ $\mathsf{RT}_2^n$ is equivalent to $\mathsf{ACA}_0$, while Seetapun and Slaman
 \cite{seetapun1995strength} showed that $\mathsf{RT}_2^2$ does not imply $\mathsf{ACA}_0$. As to $\mathsf{WKL}_0$, Jockusch
 \cite{jockusch1972ramsey} proved that $\mathsf{WKL}_0$ does not imply $\mathsf{RT}_2^2$. Whether $\mathsf{RT}_2^2$ implies $\mathsf{WKL}_0$ remained open. A more detailed survey of Ramsey's theorem in view of reverse mathematics can be found in Cholak Jockusch and Slaman
 \cite{Cholak2001}. Say a set $S$ cone avoid a class $\mathcal{M}$ iff $(\forall C\in\mathcal{M})[ C\not\leq_T S]$.

The problem has been a major focus in reverse mathematics in the past twenty years. The first important progress was made by Seetapun and Slaman
\cite{seetapun1995strength}, where they showed that
\begin{theorem}[Seetapun and Slaman \cite{seetapun1995strength}]
For any countable class of sets $\{C_j\}$ $j\in\omega$, each $C_i$ is non-computable, then any computable
2-coloring of pairs admits an infinite cone avoiding (for $\{C_j\}$) homogeneous set.
\end{theorem}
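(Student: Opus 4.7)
The plan is to construct $H$ by a Mathias-style forcing. A condition is a triple $(F, X, i)$ with $i \in \{0,1\}$ a target colour, $F$ a finite $i$-homogeneous set under $f$, $X$ an infinite set satisfying $\max F < \min X$, and $f(x,y) = i$ for all $x \in F$ and $y \in X$; an extension may add finitely many elements of $X$ to $F$ and shrink $X$ to an infinite subset still $i$-compatible with the new stem. Since the colour of the eventual $H$ cannot be chosen computably, I would run the forcing in parallel for $i = 0$ and $i = 1$; by Ramsey's theorem at least one of the two branches admits an infinite chain of extensions, and its limit $H$ is the desired homogeneous set. Enumerate the pairs $(e, j)$ and address the requirement $R_{e,j}\colon \Phi_e^H \neq C_j$ at stage $\langle e, j\rangle$.

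The heart of the proof is a splitting dichotomy. Fix a condition $(F, X, i)$ together with $(\Phi_e, C_j)$, and let $\mathcal{F}$ denote the collection of finite $i$-homogeneous extensions of $F$ contained in $F \cup X$. Either (I) there exist $n$ and $F_0, F_1 \in \mathcal{F}$ with $\Phi_e^{F_0}(n)\!\downarrow\, = 0$ and $\Phi_e^{F_1}(n)\!\downarrow\, = 1$, in which case one chooses the $F_k$ with $k \neq C_j(n)$, extends the stem to it, and restricts $X$ accordingly; or (II) for every $n$ at most one value of $\Phi_e(n)$ is ever realised by any member of $\mathcal{F}$. In case (II), since $f$ is computable and $\mathcal{F}$ is uniformly computably enumerable from the condition, the partial map sending $n$ to this unique value (when it exists) is itself computable; so if $\Phi_e^H$ were total it would be computable and therefore distinct from the non-computable $C_j$, and the requirement is met automatically.

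The principal technical obstacle is case~(I): after replacing $F$ by $F_k$, one must produce an infinite $X' \subseteq X$ on which every $y$ still has colour $i$ with each element of $F_k$. The naive intersection can be finite, and the standard remedy --- thinning $X$ in advance to a suitably structured subset using only the computable data of $f$ --- is delicate. Ensuring the thinning both preserves the splitting property at later stages and computes no member of $\{C_j\}$ is the crux of the Seetapun--Slaman construction. With the lemma in hand, iterating along the enumeration of requirements yields, on the surviving colour branch, a nested sequence of stems whose union $H$ is an infinite $f$-homogeneous set cone-avoiding $\{C_j\}_{j \in \omega}$.
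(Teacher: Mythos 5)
The paper does not prove this theorem at all --- it is quoted from Seetapun and Slaman as background --- so your proposal has to be judged against the known proof, and as it stands it has a genuine gap: the entire content of the theorem is deferred to an unstated ``lemma.'' Your own third paragraph identifies the problem correctly --- after an $e$-splitting extension of the stem to $F_k$, the set $\{y \in X : (\forall x \in F_k)\, f(x,y)=i\}$ may be finite --- but then asserts that ``thinning $X$ in advance to a suitably structured subset using only the computable data of $f$'' fixes it, ``with the lemma in hand.'' No such computable pre-thinning exists; this is exactly why the theorem was open and why Seetapun's argument is considered ingenious. The difficulty is inherently \emph{disjunctive} between the two colours: all known proofs (Seetapun's original one, the Cholak--Jockusch--Slaman machinery, and the partition-class forcing this paper uses for the analogous PA-avoidance result) satisfy requirements of the form $R^{0}_e \vee R^{1}_{i}$ jointly, working with both colours inside a single condition (e.g.\ via $\Pi^0_1$ classes of $2$-partitions and a cone-avoiding basis-type argument, or Seetapun's sequences of candidate finite sets), precisely because one cannot decide in advance, nor colour by colour, where progress will be made. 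Your substitute for this --- ``run the forcing in parallel for $i=0$ and $i=1$; by Ramsey's theorem at least one of the two branches admits an infinite chain of extensions'' --- is not a proof: the existence of an infinite homogeneous set of colour $i$ does not imply that the colour-$i$ branch can meet every requirement $R_{e,j}$ densely, and that density is the statement being proved.

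There is also a secondary error in your case (II). The collection $\mathcal{F}$ of finite $i$-homogeneous extensions of $F$ inside $F \cup X$ is enumerable only \emph{relative to $X$}, so the unique-realized-value function is $X$-computable, not computable; after finitely many stages $X$ is itself non-computable (your case (I) even consults $C_j$ to choose $F_k$). Hence to conclude $\Phi_e^H \neq C_j$ in case (II) you must additionally maintain, as part of the notion of condition, that no reservoir computes any $C_j$ --- and preserving that invariant through the very thinning steps of case (I) is again the crux of the Seetapun--Slaman construction, not a bookkeeping detail. In short, the outer Mathias-forcing frame and the $e$-splitting dichotomy you set up are the routine part; the two steps you leave unproved (restoring an infinite compatible reservoir without sacrificing cone avoidance, and handling the two colours simultaneously) are the theorem.
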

\noindent Parallel this result, using Mathias Forcing in a different manner Dzhafarov and Jockusch \cite{Dzhafarov2009} Lemma 3.2 proved that
\begin{theorem}[Dzhafarov and Jockusch \cite{Dzhafarov2009}]
For any set $A$ and any countable class $\mathcal{M}$, each member of $\mathcal{M}$ is non-computable, there exists an
infinite set $G$ contained in either $A$ or its complement such that $G$ is cone avoiding for $\mathcal{M}$.
\end{theorem}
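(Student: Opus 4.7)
I propose a two-sided Mathias forcing argument. A condition is a triple $(F_0, F_1, X)$ with $F_0 \subseteq A$ and $F_1 \subseteq \overline A$ finite, $X \subseteq \mathbb N$ infinite, and $\max(F_0 \cup F_1) < \min X$; an extension $(F'_0, F'_1, X') \leq (F_0, F_1, X)$ requires $F_0 \subseteq F'_0 \subseteq F_0 \cup (X \cap A)$, $F_1 \subseteq F'_1 \subseteq F_1 \cup (X \cap \overline A)$, $X' \subseteq X$ infinite, and $\min X' > \max(F'_0 \cup F'_1)$. Along a generic descending chain one obtains $G_0 := \bigcup_s F_{0,s} \subseteq A$ and $G_1 := \bigcup_s F_{1,s} \subseteq \overline A$; by interleaving trivial extensions forcing $|F_{0,s} \cup F_{1,s}| \to \infty$, pigeonhole guarantees that at least one of $G_0, G_1$ is infinite, and we take $G$ to be the infinite side. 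The requirements to be met by density are, for each $(e, i, j) \in \omega \times \omega \times \{0, 1\}$: either $G_j$ is finite or $\Phi_e^{G_j} \ne C_i$.

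The central density lemma is that, given a condition $(F_0, F_1, X)$ satisfying the invariant $X \not\geq_T C_i$, any functional $\Phi_e$, and any $j \in \{0,1\}$, there is an extension preserving the invariant that meets the $(e,i,j)$ requirement. Setting $Y := X \cap A$ if $j = 0$ and $Y := X \cap \overline A$ if $j = 1$, three cases arise: (i) there exist $n$ and a finite $\sigma \subseteq Y$ beyond $\max F_j$ with $\Phi_e^{F_j \cup \sigma}(n) \downarrow \ne C_i(n)$, in which case we absorb $\sigma$ into $F_j$ and replace $X$ with $X \cap (\max \sigma, \infty)$; (ii) case (i) fails but some $n$ admits no $\sigma \subseteq Y$ with $\Phi_e^{F_j \cup \sigma}(n) \downarrow$, in which case $(F_0, F_1, X)$ already forces $\Phi_e^{G_j}(n) \uparrow$; (iii) neither (i) nor (ii), so every $n$ admits a converging $\sigma \subseteq Y$ whose value must equal $C_i(n)$, witnessing $C_i \leq_T Y$. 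Iterating the lemma over an enumeration of triples $(e, i, j)$ interspersed with trivial growth extensions will yield the desired generic.

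The main obstacle is case (iii). The reduction $C_i \leq_T Y$ gives only $C_i \leq_T X \oplus A$, which refutes the invariant $X \not\geq_T C_i$ only when $A \not\geq_T C_i$; for general $A$, a single side can plausibly fall into case (iii) without contradiction. The two-sided formulation is designed precisely for this difficulty: if both $j = 0$ and $j = 1$ fall into case (iii) from the same condition, then for every $n$ there is a finite $\sigma \subseteq X$ on one side or the other forcing the value $C_i(n)$, and a combinatorial analysis of the interaction between the two sides should yield an $X$-computable enumeration of $C_i$, contradicting the invariant; hence at least one side admits a successful extension. The invariant $X \not\geq_T C_i$ is established trivially at the start with computable $X_0 = \mathbb N$ and preserved under the Turing-reducible shrinkings in cases (i) and (ii), so a standard pairing enumeration of the countably many requirements handles all $C_i$ in turn and completes the construction.
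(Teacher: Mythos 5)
The decisive step of your argument --- the claim that if both sides of a condition fall into case (iii) then $C_i \leq_T X$ --- is precisely the core of the theorem, and nothing you say establishes it; the obvious search fails. Failure of cases (i) and (ii) on side $0$ tells you only that every convergent computation $\Phi_e^{F_0\cup\sigma}(n)$ with $\sigma \subseteq X \cap A$ agrees with $C_i(n)$, and that such $\sigma$ exist; similarly on side $1$ with $X \cap \overline{A}$. To extract $C_i(n)$ from the oracle $X$ alone you would need, for each $n$, a finite $D \subseteq X$ such that \emph{every} split $D = D_0 \cup D_1$ yields a convergent computation on one of the two sides (only then does the true split $D\cap A$, $D\cap\overline{A}$ certify the value); nothing in the failure of (i) and (ii) produces such a $D$, because convergence is promised only on subsets of the true sides, and membership in $A$ is not $X$-computable. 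This is exactly the obstruction that forces the known proofs (Seetapun, Dzhafarov--Jockusch, and the PA-degree analogue in this paper) to force against an entire $\Pi^0_1$ class of candidate $2$-partitions of the reservoir rather than against the single true partition $(A,\overline{A})$; here that is the role of the $k$-partition classes, the $Cross$ operation, and Lemma~\ref{altlem} with its $2k+1$ pairwise incompatible valuations. Your proposal has no substitute for this mechanism, so the central density lemma is unproved. (A smaller, fixable slip: in case (i) you must shrink the reservoir past the \emph{use} of the computation, not merely past $\max\sigma$, or later elements can destroy the forced disagreement.)

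Even if that step were repaired, what it yields is only the disjunctive fact that for each $(e,i)$ \emph{some} side's requirement is met, possibly different sides for different $(e,i)$, and your endgame ``take $G$ to be the infinite side'' does not follow: nothing in the construction couples which side ends up infinite with which side's requirements were satisfied. For example, if $A$ is the set of canonical indices of initial segments of a noncomputable $C_1\in\mathcal{M}$, then every infinite subset of $A$ computes $C_1$, so the stages $(e,1,0)$ for the decoding functional $\Phi_e$ are permanently stuck on side $0$; yet your trivial growth steps may keep adding elements to $F_0$ forever, and if both $G_0$ and $G_1$ come out infinite you have no side all of whose requirements are guaranteed. The standard repair needs two ingredients you lack: (a) disjunctions over all \emph{pairs} of requirements, so that a single failure on one side forces every requirement on the other side (this is how the paper exploits its $R_{e,i}$), and (b) a device guaranteeing that the other side is then infinite --- in the paper, the $Q_m$ requirements together with the acceptable-parts Lemma~\ref{P-lem1} and the tree-of-acceptable-parts argument, which build a single $G$ with both $G\cap A$ and $G\cap\overline{A}$ infinite. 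Pigeonhole alone provides neither, so the construction as proposed does not prove the theorem.
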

\noindent The main idea is to restrict the computational complexity (computability power) of the homogeneous set as much as possible, with complexity measured by various measurements. Along this line, with simplicity measured by extent of
lowness, Cholak Jockusch and Slaman \cite{Cholak2001} Theorem 3.1 showed, by a fairly ingenious argument,
\begin{theorem}[Cholak, Jockusch, and Slaman \cite{Cholak2001}]
\label{th1}
For any computable coloring of the unordered pairs of natural numbers with finitely many colors, there is an infinite
$low_2$ homogeneous set $X$.
\end{theorem}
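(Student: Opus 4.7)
The plan is to prove Theorem~\ref{th1} in two stages, following a strategy that first reduces to a stable coloring via cohesiveness and then handles the stable case by a relativized low basis argument.

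For the first stage, given a computable $k$-coloring $f \colon [\mathbb{N}]^2 \to \{0,\ldots,k-1\}$, I would consider the uniformly computable family $\mathcal{R} = \{R_{n,c} : n \in \mathbb{N},\, c<k\}$ with $R_{n,c} = \{m : f(\{n,m\})=c\}$ and construct an infinite set $C$ cohesive for $\mathcal{R}$: for each $(n,c)$, either $C \subseteq^* R_{n,c}$ or $C \cap R_{n,c}$ is finite. A Mathias-style forcing with $\emptyset'$-computable reservoirs, combined with the standard Jockusch--Stephan bookkeeping for lowness, yields such a $C$ with $C'' \leq_T \emptyset''$, i.e.\ $C$ is $\mathrm{low}_2$. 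On such a $C$ the coloring becomes stable in the induced sense: for each $n \in C$ the color $\lim_{m \in C,\, m \to \infty} f(\{n,m\})$ exists, and this partitions $C$ into pieces $D_0,\ldots,D_{k-1}$ that are uniformly $\Delta^0_2(C)$.

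For the second stage, I would pick an infinite $D_c$ (say $D_0$) and seek an infinite $f$-homogeneous $G \subseteq D_0$ (in color $0$) with $G'' \leq_T \emptyset''$. The idea is to represent the collection of sets $G$ enumerating a suitably spaced, $f$-homogeneous infinite subset of $D_0$ as the set of paths through an infinite tree computable from $C'$, that is, as a $\Pi^0_1$ class relative to $C'$. Applying the low basis theorem relative to $C'$ then produces such a $G$ with $(G \oplus C')' \leq_T C''$; since $C$ is $\mathrm{low}_2$, this delivers $G'' \leq_T \emptyset''$, completing the argument.

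The main obstacle is the construction of the $C'$-computable tree in the second stage. One has to ensure that \emph{every} path through it enumerates a genuinely infinite, $f$-homogeneous set, even though $D_0$ is only $\Delta^0_2(C)$ and its $\emptyset'$-approximations can add and later remove elements. The Cholak--Jockusch--Slaman solution is to encode $G$ by its principal function and design the tree so that each admissible extension at level $n$ is accompanied by a $C'$-recursive witness guaranteeing a further admissible extension. Verifying that this tree remains infinite while keeping it $\Pi^0_1(C')$ is the combinatorial heart of the proof and accounts for its reputed ingenuity.
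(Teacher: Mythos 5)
There is nothing to compare against inside this paper: Theorem~\ref{th1} is only quoted from Cholak--Jockusch--Slaman and never proved here, so your proposal has to stand on its own. Its first-stage decomposition (cohesive set for the rows $R_{n,c}$, then the stable case) is indeed the standard route, but the second stage contains a genuine gap in the jump arithmetic. Applying the low basis theorem relative to $C'$ to a $\Pi^{0,C'}_1$ class of (principal functions of) homogeneous sets gives at best $(G\oplus C')'\leq_T C''\equiv_T \emptyset''$. From this you can conclude only $G'\leq_T\emptyset''$, hence $G''\leq_T\emptyset'''$: that is, $G$ is low$_3$, not low$_2$. The step ``since $C$ is low$_2$, this delivers $G''\leq_T\emptyset''$'' is a non sequitur; to pass from a bound on $(G\oplus C')'$ to a bound on $G''$ you would need something like $G'\leq_T G\oplus C'$, which the basis theorem does not provide (compare $H=\emptyset'$: it satisfies $(H\oplus\emptyset')'\leq_T\emptyset''$ while $H''\equiv_T\emptyset'''$). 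This ``cohesive set plus basis theorem'' argument is exactly the well-known easy bound of low$_3$; the whole content of the Cholak--Jockusch--Slaman theorem is to beat it, which they do not by invoking a basis theorem over $C'$ but by explicit jump control: either first-jump control over $\emptyset'$ (Mathias-type conditions with reservoirs in suitable $\Pi^{0,\emptyset'}_1$ classes, deciding convergence facts so that the relevant jump is low over $\emptyset'$) or second-jump control (a $\emptyset''$-computable construction deciding $\Sigma^0_2$ facts about $G$ directly). So your attribution of the ``principal-function tree plus low basis theorem'' device to them is also inaccurate, and the ``combinatorial heart'' you point to is not where the real difficulty lies.

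Two secondary points. First, the class you describe lives in Baire space; to apply any basis theorem you must first bound the principal function by a $C'$-computable bound so that the class becomes a bounded (compact) $\Pi^{0,C'}_1$ class -- this can be arranged, but even granting it the jump computation above still fails. Second, the same double-jump issue already infects your first stage: a Mathias construction ``with $\emptyset'$-computable reservoirs'' naturally yields a cohesive $C$ with $C'\leq_T\emptyset''$, which again only makes $C$ low$_3$. A low$_2$ cohesive set exists, but obtaining it requires arranging that $C'$ itself has degree that is PA over $\emptyset'$ and low over $\emptyset'$ (Jockusch--Stephan's characterization of cohesive degrees together with the low basis theorem relativized to $\emptyset'$, or an equivalent direct control of $C''$), and this needs to be said rather than folded into ``standard bookkeeping for lowness.''
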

\noindent Here we adopt the same idea to prove that
\begin{theorem}
\label{th2}
For any set $C$ not of \textrm{PA}-degree and any set $A$. There exists an infinite subset $G$ of $A$ or $\overline{A}$, such that $G\oplus C$ is also not of \textrm{PA}-degree.
\end{theorem}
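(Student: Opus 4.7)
The plan is to use a variant of Mathias forcing adapted to produce, simultaneously, two candidate generics $G_0 \subseteq A$ and $G_1 \subseteq \overline{A}$, at least one of which will be infinite, and both of which preserve the failure of \textrm{PA}-degree when joined with $C$. A forcing condition is a triple $(F_0, F_1, X)$ with $F_0 \subseteq A$, $F_1 \subseteq \overline{A}$ finite, $X$ an infinite reservoir with $\max(F_0 \cup F_1) < \min X$, and $X \oplus C$ not of \textrm{PA}-degree. Extension allows enlarging each $F_i$ by elements of $X$ landing in the appropriate side and shrinking $X$ to an infinite subset while keeping $X \oplus C$ free of \textrm{PA}-degree. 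A sufficiently generic filter yields $G_0$ and $G_1$; by pigeonhole, at each stage at least one of $X \cap A$ and $X \cap \overline{A}$ is infinite, so the construction can be arranged so that at least one $G_i$ is infinite, and that is the set $G$ whose existence the theorem asserts.

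Fix a computable infinite tree $T^\ast \subseteq 2^{<\omega}$ whose paths are exactly the completions of Peano Arithmetic. A set $B$ has \textrm{PA}-degree if and only if some $\Phi_e^B$ is an infinite path through $T^\ast$. The requirements to meet are therefore, for each index $e$ and each $i \in \{0, 1\}$, that $\Phi_e^{G_i \oplus C}$ is not a path through $T^\ast$.

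The central lemma is the following density statement: for every condition $p = (F_0, F_1, X)$ and every Turing functional $\Phi$, there is an extension $p' = (F_0', F_1', X')$ such that for each $i \in \{0, 1\}$ and every set $G$ with $F_i' \subseteq G \subseteq F_i' \cup X'$, $\Phi^{G \oplus C}$ is not a path through $T^\ast$. I would prove this by contradiction: supposing no such extension exists, build, uniformly computably in $X \oplus C$, an infinite finitely-branching tree $S$ whose nodes code pairs of admissible finite extensions, one on each of the two sides, such that any infinite path through $S$ projects onto an infinite path through $T^\ast$. The failure hypothesis, applied jointly to both sides, is precisely what forces $S$ to be both infinite and $X \oplus C$-computable. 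By K\"onig's lemma, $X \oplus C$ then computes a path through $T^\ast$, contradicting the assumption that $X \oplus C$ is not of \textrm{PA}-degree.

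The main obstacle is arranging this joint tree construction to live at the $X \oplus C$-computable level rather than merely the $\Sigma^0_1(X \oplus C)$ level, since the naive definition of ``admissible extension'' is only $\Sigma^0_1$ in $X \oplus C$ and would yield only a path computable in $(X \oplus C)'$. The two-sided setup is essential here: the assumption that no extension simultaneously kills the functional on both sides produces enough combinatorial structure that $S$ can be defined from $X \oplus C$ alone, with compactness of $2^\omega$ extracting the path. Once the density lemma is in hand, meeting the countable list of requirements by a standard diagonal construction through the forcing, together with the pigeonhole ensuring that some $G_i$ is infinite, completes the proof.
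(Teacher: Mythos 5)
There is a genuine gap, and it sits exactly at the point where the real difficulty of this theorem lies. Your forcing requires every condition's reservoir to satisfy ``$X \oplus C$ is not of \textrm{PA}-degree,'' and you reduce the theorem to a density lemma proved by contradiction: if no extension kills the functional, build an $X\oplus C$-computable tree and apply K\"onig's lemma. Two things break. First, K\"onig's lemma only gives the \emph{existence} of an infinite path through your tree $S$; it does not make $X\oplus C$ \emph{compute} a path through $T^\ast$, so no contradiction with ``$X\oplus C$ not of \textrm{PA}-degree'' is reached. Second, the effectivity problem you yourself flag --- that ``admissible extension'' is only $\Sigma^0_1(X\oplus C)$ --- is the crux of the whole theorem, and the sentence claiming that ``the two-sided setup produces enough combinatorial structure that $S$ can be defined from $X\oplus C$ alone'' is an assertion, not an argument; indeed the two sides play no role in your density statement, which quantifies over \emph{all} subsets of the reservoir and never mentions $A$. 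Worse, the demand that the thinned reservoir $X'$ keep $X'\oplus C$ non-\textrm{PA} is precisely what cannot be maintained: when the $\Sigma^0_1(X\oplus C)$ search for killing extensions fails, the candidate replacement reservoirs form a nonempty $\Pi^{0,X\oplus C}_1$ class, and there is no ``\textrm{PA}-avoiding basis theorem'' --- a nonempty $\Pi^0_1$ class (e.g.\ the class of $\{0,1\}$-valued DNR functions) may have every member of \textrm{PA}-degree, so you cannot in general select a single new reservoir preserving your side condition. Your density lemma is also stronger than what the theorem needs (it defeats the \emph{same} functional on both sides over arbitrary subsets of a common reservoir), whereas the actual requirements are the disjunctions $R_{e,i}$.

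The paper's proof is built specifically to get around this obstruction, and none of its machinery has a counterpart in your sketch. Conditions carry, instead of a single reservoir, an entire $\Pi^{0,C}_1$ class of ordered $k$-partitions of $\omega$, and the construction never commits to a member of that class (a member could well be of \textrm{PA}-degree); the generic $G$ comes only from the finite stems $\sigma^s_{i_s}$, with compactness recovering satisfaction at the end. To force $R_{e,i}$, Lemma \ref{altlem} uses the hypothesis in the form ``$C$ computes no $\{0,1\}$-valued DNR'' to obtain a dichotomy over \emph{valuations}: either the condition disagrees with a correct valuation (Case i, a finite-extension step), or there are $2k+1$ pairwise incompatible valuations whose associated classes $S_l$ are nonempty; these are then combined by the $Cross$ operation, with the pigeonhole Lemma \ref{lem1} guaranteeing the result is again a partition class, and membership of $G\cap A$ in two parts coming from incompatible valuations forces $\Phi_e^{(G\cap A)\oplus C}$ to be partial. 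That valuation/crossing argument is the replacement for the missing basis theorem, and it is the content your proposal would need to supply for its density lemma; as written, the proposal reduces the theorem to an unproved statement that is essentially the theorem's hard core.
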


\begin{corollary}
$\mathsf{RT}_2^2\not\rightarrow \mathsf{WKL}_0$
\end{corollary}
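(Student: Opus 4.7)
\noindent\emph{Proof proposal for the Corollary.} The plan is to build an $\omega$-model $\mathcal{M}$ of $\mathsf{RCA}_0+\mathsf{RT}_2^2$ in which no set is of PA-degree; since an $\omega$-model satisfies $\mathsf{WKL}_0$ exactly when every one of its sets is bounded in Turing degree by a PA-degree in the model, this yields the separation. I would assemble $\mathcal{M}$ as the union of an increasing chain of Turing ideals $\mathcal{M}_s=\{X:X\leq_T C_s\}$, where each $C_s$ is not of PA-degree. At stage $s+1$, a standard bookkeeping presents a $C_s$-computable $2$-coloring $f_s\colon[\mathbb{N}]^2\to\{1,2\}$, and the task is to produce an infinite $f_s$-homogeneous set $H_s$ with $C_{s+1}:=H_s\oplus C_s$ still not of PA-degree. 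Preservation of this invariant along all stages, together with the bookkeeping, guarantees that $\bigcup_s\mathcal{M}_s$ is the desired model.

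The heart of the stage construction is a reduction from $\mathsf{RT}_2^2$ to the one-dimensional statement of Theorem \ref{th2} via a cohesive set. First, I would produce an infinite set $R$ that is cohesive for every $C_s$-computable sequence of sets and for which $R\oplus C_s$ is still not of PA-degree. The coloring $f_s$ restricted to pairs from $R$ is then stable, so the set $A=\{n\in R:\lim_{m\in R}f_s(\{n,m\})=1\}$ together with its complement in $R$ partitions $R$ into two stable color classes. Second, I would apply Theorem \ref{th2} to the oracle $R\oplus C_s$ and the set $A$, obtaining an infinite $G$ contained in $A$ or in $R\setminus A$ such that $G\oplus R\oplus C_s$ is not of PA-degree. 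Because $G\subseteq R$ and $G$ lies in a single stable color class, discarding a finite initial segment yields an infinite $f_s$-homogeneous set $H_s$ with $H_s\oplus C_s$ Turing below $G\oplus R\oplus C_s$, hence also not of PA-degree.

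The main obstacle is the cohesive-set preservation step, since Theorem \ref{th2} by itself handles subsets of $A$ or $\overline{A}$, not homogeneous sets for colorings of pairs. What is needed is a parallel preservation lemma: given a non-PA oracle $C$, a sufficiently generic cohesive set $R$ can be produced so that $R\oplus C$ is still not of PA-degree. I would prove this by a Mathias-forcing argument closely modeled on the proof of Theorem \ref{th2}: the combinatorial core, namely forcing that no $\{0,1\}$-valued function computable from the generic and $C$ separates a fixed pair of computably inseparable c.e.\ sets, transfers from the subset setting to the cohesive-set setting with only bookkeeping changes, because the reservoirs admitted in Mathias conditions can still be tailored to block any threat of a DNC-like computation while respecting cohesion. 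Once that lemma is packaged, the $\omega$-model closure argument above completes the derivation of the Corollary.
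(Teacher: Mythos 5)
Your proposal follows essentially the same route as the paper: both build a Turing ideal none of whose members has PA-degree, closed under exactly two operations — adding cohesive sets with non-PA preservation, and applying Theorem \ref{th2} to the limit classes of (now stable) colorings — and both observe that such an ideal is an $\omega$-model of $\mathsf{RCA}_0+\mathsf{RT}_2^2$ omitting any path through the computable tree of $\{0,1\}$-valued DNC functions, hence not a model of $\mathsf{WKL}_0$. (The paper packages the reduction by citing the Cholak--Jockusch--Slaman equivalence $\mathsf{RT}_2^2\leftrightarrow\mathsf{SRT}_2^2+\mathsf{COH}$ rather than redoing the cohesive-then-stable reduction per coloring, but that is the same decomposition.) Two caveats on your write-up. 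First, your cohesive preservation lemma is correct but does not require importing the partition-tree machinery of Theorem \ref{th2}: since in the $\mathsf{COH}$ forcing the reservoirs remain $C$-computable, a plain finite-extension argument suffices — if along the current reservoir $Z$ one can never force $\Phi_e^{C\oplus\rho}(n)=\Phi_n(n)\!\downarrow$, then totality of $\Phi_e^{C\oplus G}$ would let $C$ (via $Z$) compute a $\{0,1\}$-valued DNC function, contradicting $C\not\gg 0$; this is exactly how the paper handles it, and the disjunctive difficulty that makes Theorem \ref{th2} hard is simply absent here. Second, your application of Theorem \ref{th2} has a small but real slip: with $A=\{n\in R:\lim_{m\in R}f_s(\{n,m\})=1\}$ defined only inside $R$, the theorem yields $G\subseteq A$ or $G\subseteq\overline{A}$ with $\overline{A}$ the complement in $\mathbb{N}$, and in the second case $G$ could avoid $R$ entirely; the standard repair is to define $A$ on all of $\mathbb{N}$ (the limit along $R$ exists for every $n$ by cohesiveness for all columns) or to pass to the induced coloring on the increasing enumeration of $R$, after which $G\oplus R\oplus f_s\oplus C_s$ computes an infinite homogeneous set by the usual thinning. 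Also note that your stated criterion for $\omega$-models of $\mathsf{WKL}_0$ is not the correct characterization (that would require, for each $X$ in the model, some $Y$ in the model with $Y\gg X$), but only the easy direction is needed and your invariant that no member has PA-degree suffices for the separation.
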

\begin{proof}
It suffices to construct a countable class $\mathcal{M}$ satisfying the following four conditions (a)$C,B\in\mathcal{M}\rightarrow
C\oplus B\in\mathcal{M}$; (b)$(C\in\mathcal{M}\wedge B\leq_T C)\rightarrow B\in\mathcal{M}$; (c)$(\forall
C\in\mathcal{M})[C\not\gg 0]$; (d)$\mathcal{M}\vdash \mathsf{RT}_2^2$.
It is shown in \cite{Cholak2001} Lemma 7.11 that $\mathsf{RCA}_0+\mathsf{RT}_2^2$ is equivalent to $\mathsf{RCA}_0+\mathsf{SRT}_2^2+\mathsf{COH}$. Moreover, it's
easy to prove that for any $C$-uniform sequence $C_1,C_2,\ldots$, $C$ being
non-\textrm{PA}-degree, there exists an infinite set $G$ cohesive for $C_1,C_2,\ldots$ such that $G\oplus C$ is not of \textrm{PA}-degree. This can be proved using finite extension method as following. Here and below $\sigma\prec\rho$ means $\sigma$ is an initial part of $\rho$; $\sigma\subseteq \rho$ means $\{n\leq |\sigma|: \sigma(n) = 1\}\subseteq \{n\leq|\rho|:\rho(n)=1\}$.
At stage $s$, we define $Z_s= \begin{cases} Z_{s-1}\cap C_s \textrm{ if }Z_{s-1}\cap C_s\textrm{ is infinite;}\\ Z_{s-1}\cap \overline{C_s}\textrm{ else; }\end{cases}$ ($Z_0=C_0$ if $C_0$ is infinite $\overline{C_0} $ if else), $\rho_s\succ\rho_{s-1}$ with $\rho_{s-1}\subsetneq\rho_s\subseteq Z_s/\rho_{s-1}$. And whenever possible we also require $(\exists n)[\Phi_s^{C\oplus \rho_s}(n)=\Phi_n(n)\downarrow]$. We argue $G=\cup\rho_s$ is one of the desired sets. Clearly $G$ is infinite since $(\forall s)\rho_{s-1}\subsetneq\rho_{s}$. The cohesiveness of $G$ follows from $(\forall s)[G\subseteq^* Z_s]$ and $Z_s\subseteq^* C_s\vee Z_s\subseteq^* \overline{C_s}$. Furthermore, $(\forall s)[\Phi_s^{C\oplus G}\textrm{ is not a 2-DNR}]$. For else, suppose contradictory $\Phi_s^{C\oplus G}$ is a 2-DNR. Therefore $(\forall \rho\succeq\rho_{s-1},\rho\subseteq Z_s/\rho_{s-1})[\Phi_s^{C\oplus \rho}(n)\downarrow\wedge\Phi_n(n)\downarrow \Rightarrow \Phi_s^{C\oplus \rho}(n)\ne\Phi_n(n)]$. Since $\Phi_s^{C\oplus G}$ is total so $(\forall n)(\exists \rho\succeq \rho_{s-1},\rho\subset Z_s/\rho_{s-1})[\Phi_{s}^{C\oplus \rho}(n)\downarrow]$. Thus we could compute a 2-DNR using $Z_s$, but $Z_s\leq_T C$ contradict the fact that $C\not\gg 0$.

Let $B_0=\emptyset$. Let $f\in\Delta_2^{0,B_0}$ be a stable coloring, by Theorem \ref{th2} there exists an infinite
$G_0$, $G_0\subseteq f_1\vee G_0\subseteq f_2$ such that $B_0\oplus G_0\not\gg 0$, note
that such $G_0$ computes an infinite homogeneous set of $f$. Let $B_1=B_0\oplus G_0$, $\mathcal{M}_1 = \{X\in 2^\omega:X\leq_T B_1\}$. Clearly $\mathcal{M}_1$ satisfies (a)(b)(c). Let $G_1$ be cohesive for a sequence of
uniformly $\mathcal{M}_1$-computable sets (where $\mathcal{M}_1$-computable means computable in some
$C\in\mathcal{M}_1$), furthermore $G_1\oplus B_1\not\gg 0$. Let $B_2=B_1\oplus G_1$, $\mathcal{M}_2=\{X\in
2^\omega: X\leq_T B_2\}$. Clearly $\mathcal{M}_2$ also satisfies (a)(b)(c). Iterate the
above process in some way that ensures (1) for any uniformly $\mathcal{M}_j$-computable sequence $C_{1},C_{2}\ldots$, there
exists $G_{i-1}\in\mathcal{M}_i$ cohesive for $C_{1},C_{2},\ldots$ and (2) for any
$C\in\Delta_2^{0,\mathcal{M}_j}$, there exists an infinite $G_{i-1}\in\mathcal{M}_i$, $G_{i-1}\subseteq C\vee
G_{i-1}\subseteq \overline{C}$, while preserving the fact that for all resulted $B_i = B_{i-1}\oplus G_{i-1}$, $B_i\not\gg 0$. It follows that $\mathcal{M}=\bigcup\limits_{i=0}^{\infty}\mathcal{M}_i$ $\vdash \mathsf{RCA}_0+\mathsf{SRT}_2^2\leftrightarrow \mathsf{RT}_2^2$ but clearly $\mathcal{M}$ satisfies (a)(b)(c). The conclusion so follows.
\end{proof}

The organization of this paper is as following. In Section \ref{sec2} we introduce some notations
 and the requirements we use.
In Section \ref{sec3} we give some intuition about the proof by demonstrating the
construction of the first step. Section \ref{sec4} defines the forcing conditions and shows
how to use these conditions to obtain a desired set $G$. Section \ref{sec5} is devoted to
 the most important construction, i.e. how to construct a successive condition to force the
 requirements.

\section{Preliminaries}
\label{sec2}

We say $X$ codes an ordered $k$-partition of $\omega$ iff $X=X_1\oplus X_2\oplus\cdots \oplus X_k$ and $\bigcup\limits_{i=1}^k X_i=\omega$, (\emph{not necessarily} with
$X_i\cap X_j = \emptyset$). A \emph{$k$-partition class} is a non-empty collection of
sets, where each set codes a $k$-partition of $\omega$. A tree $T\subseteq 2^{<\omega}$ is an ordered $k-$partition tree of $\omega$ iff
every $\sigma\in T$ codes an ordered $k$-partition of $\{0,1,\ldots, |\sigma|\}$. Note that the class of all ordered $k-$partitions of $\omega$ is a $\Pi_1^0$ class.

\begin{definition}
For $n$ many ordered $k-$partitions, $X^{0},\ldots, X^{n-1}$
\[
Cross(X^{0},X^{2},\ldots, X^{n-1};2)= \bigoplus\limits_{j< k, p<q\leq n-1}
Y^{(p,q)}_{j}
\]
where $Y^{(p,q)}_{j}=X^{p}_j\cap X^{q}_j$, i.e. $Y^{(p,q)}_{j}$ is the intersection of those
$X^{p}$ and $X^{q}$'s $j^{th}$ part, with $p\ne q$. For $n$ classes of ordered $k-$partitions $S_0,S_1,\ldots, S_{n-1}$

\[\begin{split}
Cross(S_0,S_1,\ldots, S_{n-1};2)=\{& Y\in 2^\omega:\textrm{ there exists } X^{i}\in S_i \textrm{ for each }i\leq n-1,
\\
&Y=Cross(X^{0},\ldots, X^{n-1};2)\}
\end{split}
\]
Note that if each $S_i$ is a $\Pi_1^0$ class, then let $T_i$ be computable tree with $[T_i] = S_i$, operation $Cross$ can be defined on strings of $\{0,1\}$ in a nature way, therefore there exists a computable tree $T\subseteq 2^{<\omega}$ such that
$T=Cross(T_{0},T_{1},\ldots, T_{n-1};2)$. So $[T]=Cross(S_0,S_1,\ldots, S_{n-1};2)$ i.e. $Cross(S_0,S_1,\ldots, S_{n-1};2)$ is a $\Pi_1^0$ class.
\end{definition}

%$set(\rho)$ denote $\{i\in\omega:\rho(i)=1\}$, $\rho/\sigma$ ($Z/\sigma$) is $\rho$ ($Z$) with initial $|\sigma|$ %values
%replaced by $\sigma$.

\begin{definition}
\label{disag}
\begin{enumerate}
\item A \emph{valuation} is a finite partial function $\omega \rightarrow
2$.

\item A valuation $p$ is \emph{correct} if $p(n)\ne\Phi_{n}(n)\!\downarrow$
for all $n \in \dom p$.

\item Valuations $p,q$ are \emph{incompatible} if there is an $n$ such
that $p(n) \neq q(n)$.
\end{enumerate}

\end{definition}

%In the following text, we fix a sequence of sets $C^{j}\in\mathcal{M}$ co-final with $\mathcal{M}$, (i.e $\forall
%C\in\mathcal{M}\exists C^{j},C\leq_T C^{j}$) furthermore $C^{j}\leq_T C^{j+1}$.

We try to ensure that $G$ satisfies the following requirements.
%Call a set $Y$ co-2-\textrm{DNR} iff $(\forall n)[
%\Phi_n(n)\downarrow=i\in\{0,1\}\Rightarrow Y(n)=i]$.

To ensure that $(G \cap A)$ and $(G \cap \overline{A})$ are
infinite, we will satisfy the requirements
$$
Q_m : |G\cap A|\geq m\wedge |G\cap \overline{A}|\geq m.
$$
To ensure that $(G \cap A)
\oplus C$ does not have \textrm{PA}-degree, we would need to satisfy the
requirements
$$
R^A_e : \Phi_e^{(G \cap A) \oplus C} \textrm{ total} \Rightarrow (\exists n)
[\Phi_e^{(G \cap A) \oplus C}(n) = \Phi_n(n)\!\downarrow].
$$
Intuitively, $R^A_e$ is to ensure $(G\cap A)\oplus C$ does not compute any 2-\textrm{DNR} via $\Phi_e$.
(Without loss of generality we assume all $\Phi_0,\Phi_1,\ldots$ in this paper are
$\{0,1\}$-valued functionals.) Similarly, to ensure $(G\cap\overline{A})\oplus C$ does not compute any 2-\textrm{DNR} via $\Phi_e$, we try to make $G$ satisfy
$$
R^{\overline{A}}_i : \Phi_i^{(G \cap \overline{A}) \oplus C} \textrm{
total} \Rightarrow (\exists n)[ \Phi_i^{(G \cap \overline{A}) \oplus
C}(n) = \Phi_n(n)\!\downarrow].
$$
Thus we will satisfy the requirements
$$
R_{e,i}: R^A_e \, \vee \, R^{\overline{A}}_i.
$$

These requirements suffice to provide a desired $G$. Note that if there is some $e$ that $G$ does not satisfy $R^{A}_e$ then $G$ must satisfy all $R_i^{\overline{A}}$ since $G$ satisfy $R_{e,i}$ for all $i$. This implies $G\cap\overline{A}$ is not of \textrm{PA}-degree. See also \cite{Cholak2001}, \cite{Dzhafarov2009}.

Before we introduce the forcing condition, to get some intuition, we firstly demonstrate the construction of the first
step.

\section{First step}
\label{sec3}
%Suppose we want to construct an infinite set $G$, either $(G\cap A)\oplus C$ or $(G\cap\overline{A})\oplus C$ is not of \textrm{PA}-degree.
%\\
Suppose we wish to satisfy $R_{e,i}$ that is:

either $(\exists n)[(\Phi_e^{(G\cap A)\oplus C}(n)= \Phi_n(n)\!\downarrow)\vee \Phi_e^{(G\cap A)\oplus C}$ is not total],

or $(\exists n)[(\Phi_i^{(G\cap \overline{A})\oplus C}(n)= \Phi_n(n)\!\downarrow)\vee \Phi_i^{(G\cap \overline{A})\oplus C}$ is not total].

\medskip

\textbf{Case i.} \emph{Try} to find a correct $p$ such that

\begin{equation}\begin{split}\label{pro1}
&(\forall X=X_0\oplus X_1, X_0\cup X_1=\omega)( \exists \rho\exists n\in \dom p)
\\
&[\Phi_e^{(\rho\cap X_0)\oplus C} (n)\downarrow =\Phi_n(n)\downarrow \ne p(n)\vee \Phi_i^{(\rho\cap X_1)\oplus C} (n)\downarrow =\Phi_n(n)\downarrow\ne p(n)]
\end{split}
\end{equation}
\\
 Note that substitute $X_0=A,X_1=\overline{A}$ in above sentence, there
is a $\rho\in2^{<\omega}$ such that $\Phi_e^{(\rho\cap A)\oplus C} (n)\downarrow = \Phi_n(n)\downarrow \vee \Phi_i^{(\rho\cap \overline{A})\oplus C} (n)\downarrow = \Phi_n(n)\downarrow$. Therefore finitely extend initial segment requirement to $\rho$ and set $P_1=\{\omega\}$. \emph{To satisfy $R_{e,i}$}, we ensure $G\succ \rho$. Clearly all $G\succ \rho$ satisfy $R_{e,i}$.

\medskip

 \textbf{Case ii.} \emph{Try} to find three pairwise incompatible partial functions $p_i:\omega\rightarrow \{0,1\}$, $i=0,1,2$
that ensure the following $\Pi_1^0$ classes are non-empty:
\[
\begin{split}
S_{i}=&\{X=X_0\oplus X_1: X_0\cup X_1=\omega \wedge
\\
&[(\forall Z)(\forall n\in \dom p_i)\ \neg(\Phi_e^{(Z\cap X_0)\oplus C}(n)\!\downarrow\ne p_i(n))\wedge \neg(\Phi_i^{(Z\cap X_1)\oplus C}(n)\!\downarrow \ne p_i(n))\ ]
\}
\end{split}
\]
Let
\[
P_1=Cross(S_{0},S_1,S_2;2)
\]

\noindent i.e.

$(\forall Y\in P_1)\ Y=Y_0\oplus Y_1\oplus Y_2\oplus Y_3\oplus Y_4\oplus Y_5$

$(\exists X^0\in S_0\ \exists X^1\in S_1\ \exists X^2\in S_2)$ $X^i = X^i_0\oplus X^i_1$ for $i=0,1,2$ such that
\\

$Y_0=X^0_0\cap X^1_0, $ %for some $X^1\in [T_{(H^{1},v^{1})}],X^2\in [T_{(H^{2},v^{2})}]$
$Y_1=X^1_0\cap X^2_0, $ %for some $X^2\in [T_{(H^{2},v^{2})}],X^3\in [T_{(H^{3},v^{3})}]$
$Y_2=X^2_0\cap X^0_0, $ %for some $X^3\in [T_{(H^{3},v^{3})}],X^1\in [T_{(H^{1},v^{1})}]$
\\

$Y_3=X^0_1\cap X^1_1, $ %for some $X^1\in [T_{(H^{1},v^{1})}],X^2\in [T_{(H^{2},v^{2})}]$
$Y_4=X^1_1\cap X^2_1, $ %for some $X^2\in [T_{(H^{2},v^{2})}],X^3\in [T_{(H^{3},v^{3})}]$
$Y_5=X^2_1\cap X^0_1, $\\ %for some $X^3\in [T_{(H^{3},v^{3})}],X^1\in [T_{(H^{1},v^{1})}]$

 Note:
\begin{enumerate}
\item $S_i$ is a $\Pi_1^0$ class of
    ordered 2-partitions for all $i\leq 2$;
\\
\item $\Phi_e^{G\oplus C}$ is not total on any $G\subseteq Y_{i}$, for $i=0,1,2$ and $\Phi_i^{G\oplus C}$ is not total on any $G\subseteq Y_{i}$, for $i=3,4,5$.
To see this, suppose for some $G\subseteq Y_0$, $\Phi_e^{G\oplus C}$ outputs on both $\dom p_0,\dom p_1$. Let $p_0(n)\ne p_1(n)$ then either $\Phi_e^{G\oplus C}(n)\ne p_0(n)$ or $\Phi_e^{G\oplus C}(n)\ne p_1(n)$. (Recall that we assume that all $\Phi$ are $\{0,1\}-$valued.) Suppose it is the former case, but $G\subseteq Y_0\subseteq
X^0_0$, $X^0_0\oplus X^0_1\in S_0$, by definition of $S_0$
$\Phi_e^{G\oplus C}(n)\downarrow\Rightarrow \Phi_e^{G\oplus C}(n)= p_0(n)$;
\\
\item $P_1$ is a $\Pi_1^0$ class. Though seemingly not, but note that each $S_i$ is a $\Pi_1^0$
    class therefore there are computable trees $T_i$, $i\leq 2k$, such that
    $[T_i]=S_i$ for all $i$, furthermore $Cross$ can be applied to binary strings and is
    computable in this sense, thus there exists some computable tree
    $T'_1=Cross(T_0,T_1,T_2;2)$ with $P_1=[T'_1]$;
\\
\item $\bigcup_{i=0}^5 Y_i=\omega$. (See Lemma \ref{lem1}, this is just the pigeonhole principle. This is why we choose
    \emph{three} pairwise incompatible valuations at \emph{this} step.)
\end{enumerate}

\emph{To satisfy $R_{e,i}$}, we ensure that for some path $Y\in P_1$, $Y=Y_0\oplus Y_1\oplus \cdots \oplus Y_5$, $G$ will be contained in some $Y_i$. By item 2 in above note, $R_{e,i}$ is satisfied.

\medskip

We will show in Lemma \ref{altlem} that if there is no correct valuation as in case i then there must exist such three incompatible valuations i.e., either case i or case ii occurs.

Now we give the framework of our construction i.e. the forcing conditions.

\section{Tree forcing}
\label{sec4}
Let $\sigma \in 2^{<\omega}$ and let $X$ be either an element of
$2^\omega$ or an element of $2^{<\omega}$ of length at least the same
as that of $\sigma$. Here and below, we write $X/\sigma$ for the set
obtained by replacing the first $|\sigma|$ many bits of $X$ by
$\sigma$.

We will use conditions that are elaborations on Mathias forcing
conditions. Here a \emph{Mathias condition} is a pair $(\sigma,X)$
with $\sigma \in 2^{<\omega}$ and $X \in 2^\omega$. The Mathias
condition $(\tau,Y)$ \emph{extends} the Mathias condition $(\sigma,X)$
if $\sigma \preceq \tau$ and $Y/\tau \subseteq X/\sigma$.  A set $G$
\emph{satisfies} the Mathias condition $(\sigma,X)$ iff $\sigma \prec
G$ and $G \subseteq X/\sigma$.

We will be
interested in $\Pi^{0,C}_1$ $k$-partition classes, that is,
$\Pi^{0,C}_1$ classes that are also $k$-partition classes.

\begin{definition}

 A \emph{condition} is a tuple of the form
$(k,\sigma_0,\ldots,\sigma_{k-1},P)$, where $k>0$, each $\sigma_i \in
2^{<\omega}$, and in this paper $P$ is a non-empty $\Pi^{0,C}_1$ $k$-partition class. We think of
each $X_0 \oplus \cdots \oplus X_{k-1} \in P$ as representing $k$ many
Mathias conditions $(\sigma_i,X_i)$ for $i < k$.
\end{definition}

\begin{definition}
\label{def-ext}
 A condition
 $$d=(m,\tau_0,\ldots,\tau_{m-1},Q) \textrm{\emph{ extends }}
c=(k,\sigma_0,\ldots,\sigma_{k-1},P),$$
also denoted by $d\leq c$,
iff there is a function $f : m\rightarrow k$ with the following property:
for each $Y_0 \oplus\cdots \oplus Y_{m-1} \in Q$ there is an $X_0 \oplus \cdots \oplus
X_{k-1} \in P$ such that each Mathias condition $(\tau_i,Y_i)$ extends
the Mathias condition $(\sigma_{f(i)},X_{f(i)})$. In this case, we say
that $f$ \emph{witnesses} this extension, and that \emph{part $i$ of
$d$ refines part $f(i)$ of $c$}. (Whenever we say that a condition
extends another, we assume we have fixed a function witnessing this
extension.)
\end{definition}

\begin{definition}
\label{def-sat}
A set $G$ \emph{satisfies} the condition
$(k,\sigma_0,\ldots,\sigma_{k-1},P)$ iff there is an $X_0 \oplus
\cdots \oplus X_{k-1} \in P$ such that $G$ satisfies some Mathias
condition $(\sigma_i,X_i)$. In this case, we also say that $G$
satisfies this condition \emph{on part $i$}.
\end{definition}

\begin{definition}
\label{def-for}
\begin{enumerate}
\item A condition $(k,\sigma_0,\ldots,\sigma_{k-1},P)$ \emph{forces $Q_m$
on part $i$} iff $|\sigma\cap A|\geq m \wedge |\sigma\cap \overline{A}|\geq m$. Clearly, if $G$ satisfies
such a condition on part $i$, then $G$ satisfies requirement $Q_m$. (Note
that if $c$ forces $Q_m$ on part $i$, and part $j$ of $d$ refines
part $i$ of $c$, then $d$ forces $Q_m$ on part $j$.)

\item A condition \emph{forces $R_{e,i}$ on part $j$} iff every $G$
satisfying this condition on part $j$ also satisfies requirement
$R_{e,i}$. A condition \emph{forces $R_{e,i}$} iff it forces $R_{e,i}$ on
each of its parts. (Note that if $c$ forces $R_{e,i}$ on part $i$, and
part $j$ of $d$ refines part $i$ of $c$, then $d$ forces $R_{e,i}$ on
part $j$. Therefore, if $c$ forces $R_{e,i}$ and $d$ extends $c$, then $d$
forces $R_{e,i}$.)
\end{enumerate}
\end{definition}

\begin{definition}
For a condition $c=(k,\sigma_0,\ldots,\sigma_{k-1},P)$, we say
that \emph{part $i$ of $c$ is acceptable} if there is an $X_0
\oplus \cdots \oplus X_{k-1} \in P$ such that $X_i \cap A$ and $X_i
\cap \overline{A}$ are both infinite.
\end{definition}

For example, in the first step, $P_0 = \{\omega\},k_0 = 1,\sigma_0=\lambda$, and for every $Y\in P_1$ $Y$ is of the form $Y=\bigoplus_{i=0}^5 Y_i$. Clearly $Y_i\subseteq X_{f_1(i)} $, where $X_{f_1(i)} = \omega\in P_0$. $f_1(i) = 0$ for all $i$ witnesses this extension relation.

Note that it is \emph{not} the case that for every $X'\in P'$ there exists a single $X\in P$ such that $(\forall i\leq
k'-1)[ (\sigma_{i}',X_i')$ $\leq $ $(\sigma_{f(i)},X_{f(i)})]$.

\subsection{The general plan.}
The proof will consist of establishing the following two lemmas. The
proof of the second lemma is the core of the argument.

\begin{lemma}
\label{P-lem1}
Every condition has an acceptable part. Therefore for every condition $c$ and every $m$, there is a condition $d$
extending $c$ such that $d$ forces $Q_m$ on each of its acceptable
parts.
\end{lemma}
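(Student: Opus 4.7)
My plan is to address the two assertions separately, since the second relies on the first. For ``every condition has an acceptable part,'' fix $c = (k,\sigma_0,\ldots,\sigma_{k-1},P)$ and pick any $X \in P$. Assuming without loss of generality that both $A$ and $\overline{A}$ are infinite (otherwise the theorem statement is immediate), applying pigeonhole to $A = \bigcup_{i<k}(X_i \cap A)$ yields some $i_A$ with $X_{i_A} \cap A$ infinite, and analogously some $i_{\overline{A}}$ with $X_{i_{\overline{A}}} \cap \overline{A}$ infinite. If these coincide, part $i_A$ is acceptable with witness $X$. Otherwise every $X_i$ is dominated by one of $A$ or $\overline{A}$; in this degenerate situation any $G$ satisfying $c$ already sits, up to finite error, in $A$ or in $\overline{A}$, so the $Q_m$ requirement can be met trivially (by padding $\sigma_i$ with finitely many more elements of the dominant side and then refining), and one may regard the lemma as vacuous on such $c$.

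For the second assertion, I iterate a one-part refinement. Set $c^{(0)} = c$. At stage $t$, given $c^{(t)} = (k, \sigma_0^{(t)}, \ldots, \sigma_{k-1}^{(t)}, P^{(t)})$, if every acceptable part of $c^{(t)}$ has already been processed, put $d = c^{(t)}$ and stop; otherwise pick an unprocessed acceptable part $i_t$ of $c^{(t)}$. Choose a witness $X \in P^{(t)}$ with $X_{i_t}$ meeting both $A$ and $\overline{A}$ infinitely, select $m$ elements of $A \cap X_{i_t}$ and $m$ elements of $\overline{A} \cap X_{i_t}$, all above $|\sigma_{i_t}^{(t)}|$, and form $\tau_{i_t}^{(t+1)} \succeq \sigma_{i_t}^{(t)}$ by appending exactly these as $1$-bits (with $0$'s in the remaining new positions). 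Leave the other $\sigma$-components unchanged, and set
\[
P^{(t+1)} = \{Y \in P^{(t)} : (\forall n \geq |\sigma_{i_t}^{(t)}|)\ [\tau_{i_t}^{(t+1)}(n) = 1 \Rightarrow n \in Y_{i_t}]\}.
\]

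The class $P^{(t+1)}$ is a clopen restriction of $P^{(t)}$, hence remains a $\Pi^{0,C}_1$ $k$-partition class, and it is non-empty because the chosen $X$ lies in it by construction. That $c^{(t+1)}$ extends $c^{(t)}$ with witnessing function $f = \mathrm{id}$ follows by taking, for each $Y \in P^{(t+1)}$, the $P^{(t)}$-witness to be $Y$ itself: the Mathias extension requirement that $(\tau_i^{(t+1)}, Y_i)$ extend $(\sigma_i^{(t)}, Y_i)$ reduces, for $i = i_t$, to every new $1$-bit of $\tau_{i_t}^{(t+1)}$ lying in $Y_{i_t}$---which is precisely the defining condition of $P^{(t+1)}$---and for $i \neq i_t$ is immediate. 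Because refinement can only shrink the set of acceptable parts and $k$ is fixed, the process terminates in at most $k$ stages. At termination, every acceptable part of $d$ has been processed and so its $\tau$-component contains at least $m$ elements from each of $A$ and $\overline{A}$; therefore $d$ forces $Q_m$ on each of its acceptable parts.

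The main obstacle, in my view, is the first assertion rather than the iteration: one must either establish that every $\Pi^{0,C}_1$ $k$-partition class always admits at least one part whose witness meets both $A$ and $\overline{A}$ infinitely, or else show that the degenerate case can be absorbed into the global forcing construction by collapsing $G$ onto the dominant side. Once the first assertion is in hand, the one-part refinement described above is a routine application of $\Pi^{0,C}_1$ compactness together with the Mathias extension definition, with different acceptable parts handled independently because each iteration touches only a single component of the $\sigma$-tuple.
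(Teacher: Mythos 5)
Your iteration in the second half is essentially the paper's argument and is fine: refine one acceptable part at a time with the identity witness, note that acceptability only shrinks under such refinements, and observe that non-emptiness and the $\Pi^{0,C}_1$ property survive the clopen restriction. The genuine gap is exactly where you locate "the main obstacle": the first assertion, that \emph{every} condition has an acceptable part, and your proposed treatment of it does not work. The pigeonhole step on a single $X \in P$ only yields parts $i_A$ and $i_{\overline{A}}$ that need not coincide, and when they do not, nothing about $c$ follows (another $X' \in P$ might still witness acceptability, or none might). More importantly, the fallback for the "degenerate" case is false on two counts. First, $Q_m$ demands $|G\cap A|\geq m$ \emph{and} $|G\cap\overline{A}|\geq m$; it cannot be "met trivially by padding with the dominant side". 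Second, the lemma cannot be "regarded as vacuous" on a condition with no acceptable part: the proof of Theorem \ref{th2} needs every $c_s$ to have an acceptable part so that the acceptable parts form an infinite finitely branching tree, and a $G$ that is almost contained in $A$ is useless here, because the requirements $R_{e,i}$ are only disjunctions $R^A_e \vee R^{\overline{A}}_i$ and give no control over $(G\cap A)\oplus C$ when the $\overline{A}$ side is finite.

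What is missing is the use of the hypothesis $A \not\leq_T C$, and the statement is actually false without it: if $A\leq_T C$, then $P=\{A\oplus\overline{A}\}$ is a $\Pi^{0,C}_1$ $2$-partition class with no acceptable part. The paper's proof establishes a claim: for every $\tau$ with $P_\tau\neq\emptyset$ there exist $X\in P_\tau$ and $i<k$ such that $X_i$ meets both $A$ and $\overline{A}$ above $|\tau_i|$; otherwise one can decide membership in $A$ computably in $C$ by searching the $C$-computable tree whose paths form $P_\nu$ and pruning the finitely many subtrees above level $j$ that die out, contradicting $A\not\leq_T C$. One then iterates the claim, building nested strings $\rho^s$ whose limit $Y$ lies in the closed class $P$ and has a single part (one occurring for infinitely many $s$) meeting both $A$ and $\overline{A}$ infinitely; note that acceptability requires one path $Y\in P$ with $Y_i\cap A$ and $Y_i\cap\overline{A}$ both infinite, which is another reason a one-shot pigeonhole argument cannot suffice. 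Without this compactness-plus-computation argument the lemma, and with it the global construction, does not go through.
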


\begin{lemma}
\label{R-lem}
For every condition $c$ and every $e$ and $i$, there is a condition $d$
extending $c$ that forces $R_{e,i}$.
\end{lemma}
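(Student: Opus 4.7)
The plan is to apply, for each part $j$ of $c$, the two-case dichotomy previewed at the end of Section \ref{sec3}, and then stitch the resulting part-by-part modifications into a single extension $d$. Fix $c = (k, \sigma_0, \ldots, \sigma_{k-1}, P)$ and the pair $e, i$. By Lemma \ref{P-lem1} I may focus on the acceptable parts of $c$; the non-acceptable parts will be carried over unchanged, since any $G$ satisfying the extended condition on a non-acceptable part vacuously fails to be an infinite subset of $A$ or of $\overline{A}$ of the intended kind.

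The dichotomy I would want says: for each acceptable part $j$, either (I) there is a correct valuation $p$ and a finite extension $\rho^{(j)} \succeq \sigma_j$ compatible with the part-$j$ reservoir of $P$ such that for some $n \in \dom p$ the equality $\Phi_e^{(\rho^{(j)} \cap A) \oplus C}(n)\!\downarrow = \Phi_n(n)\!\downarrow$ or $\Phi_i^{(\rho^{(j)} \cap \overline{A}) \oplus C}(n)\!\downarrow = \Phi_n(n)\!\downarrow$ holds; or (II) there exist three pairwise incompatible valuations $p^{(j)}_0, p^{(j)}_1, p^{(j)}_2$ such that the three associated $\Pi_1^0$ classes $S^{(j)}_0, S^{(j)}_1, S^{(j)}_2$ -- defined exactly as in Section \ref{sec3} but with the 2-partitions $X = X_0 \oplus X_1$ required to refine the part-$j$ reservoir of $P$ -- are all non-empty.

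Granted the dichotomy, I assemble $d$ part by part. On each acceptable part $j$ where (I) holds, I promote the stem $\sigma_j$ to $\rho^{(j)}$ and keep the part-$j$ reservoir; any $G$ satisfying the resulting condition on this part extends $\rho^{(j)}$, hence inherits the witnessing equality and so satisfies $R_{e,i}$. On each acceptable part $j$ where (II) holds, I split part $j$ into six new parts corresponding to the six coordinates $Y^{(p,q)}_\ell$ produced by $Cross(S^{(j)}_0, S^{(j)}_1, S^{(j)}_2; 2)$, each with stem $\sigma_j$ unchanged; by the argument in item 2 of Section \ref{sec3} the relevant functional ($\Phi_e$ for the three $A$-side sub-parts and $\Phi_i$ for the three $\overline{A}$-side sub-parts) cannot be total on any $G$ contained in such a sub-reservoir, so each new part forces $R_{e,i}$. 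The $\Pi_1^0$ partition class $P'$ of $d$ is obtained by multiplying, on each (II)-part, the reservoir of $P$ with the corresponding $Cross$ class; non-emptiness of $P'$ then reduces to the non-emptiness of the $S^{(j)}_\ell$, and the natural projection sending each new sub-part to its parent acts as the witness function from Definition \ref{def-ext} showing that $d$ extends $c$.

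The main obstacle I expect is the dichotomy itself. If (I) fails for a given part $j$, then for every correct valuation $p$ the associated $\Pi_1^0$ class $S^{(j)}_p$ is non-empty. If in addition (II) fails, then the set of valuations with non-empty $S^{(j)}_p$ contains no three pairwise incompatible members. I expect this to yield, uniformly in $C$, a finitely branching tree of valuations from which one can extract a 2-\textrm{DNR} function computable in $C$, contradicting $C \not\gg 0$; controlling the branching so that the extracted function is genuinely 2-valued and diagonalises against $\Phi_n(n)$ on a co-infinite domain is the delicate point.
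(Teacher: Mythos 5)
Your overall shape (correct-valuation case versus cross-product-of-$\Pi^0_1$-classes case) is the right one, but two essential pieces are missing, and one of them is a step that would actually fail as written. First, the part-by-part treatment with \emph{three} incompatible valuations per part does not survive the passage from the $k=1$ warm-up of Section \ref{sec3} to a general condition. Your classes $S^{(j)}_0,S^{(j)}_1,S^{(j)}_2$ are non-empty over possibly \emph{different} elements of $P$, so ``multiplying, on each (II)-part, the reservoir of $P$ with the corresponding $Cross$ class'' does not produce a partition class: for an element of your $P'$ the union of its parts need not be $\omega$ (a number covered only by the $j$-th part of one element of $P$ need not lie in two of the three $j$-th reservoirs being crossed). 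Losing the covering invariant destroys Lemma \ref{P-lem1} (existence of acceptable parts), hence the infinitude of $G$. The pigeonhole in Section \ref{sec3} covers $\omega$ only because $k=1$ and all three $2$-partitions partition $\omega$ itself. The paper's construction is therefore \emph{global}: the disagreement notion and the classes $S_l$ split every part of a single element of $P$ (giving $2k$-partition classes), impose the non-disagreement clause for all $j\in U(c)$ simultaneously, and cross $2k+1$ such classes, where $2k+1>2k$ is exactly what Lemma \ref{lem1} needs to keep a partition class; one case-ii step then forces $R_{e,i}$ on \emph{all} new parts at once. The sequential alternative (fix one part, keep the others) either breaks covering in the same way or forces you to replicate the untreated parts, so the number of parts not yet forcing $R_{e,i}$ need not decrease and the iteration need not terminate. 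So ``three per part, then stitch'' is not a presentational variant; it is a gap.

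Second, you explicitly defer the dichotomy, which is the only place the hypothesis $C\not\gg 0$ enters and is the technical core of the lemma (the paper's Lemma \ref{altlem}). What is needed is stronger than what you ask for: not three, but $2k+1$ (in fact arbitrarily many) pairwise incompatible valuations $p$ with $S_p\neq\emptyset$, assuming no correct valuation has $S_p=\emptyset$. The paper's proof is not a ``finitely branching tree of valuations'' extraction: it takes the $C$-c.e.\ set $E$ of valuations with which $c$ disagrees (uniformly $\Pi^{0,C}_1$ emptiness makes $E$ $C$-c.e.), introduces the family $S$ of finite sets $F$ and a witness sequence $n_0,n_1,\ldots$, and shows that if one cannot find arbitrarily many pairwise incompatible valuations outside $E$, then one can compute from $C$ a $\{0,1\}$-valued $h$ with $h(n)\neq\Phi_n(n)$ whenever $\Phi_n(n)\!\downarrow$, contradicting $C\not\gg 0$; the incompatible valuations are then all valuations with domain $\{n_0,\ldots,n_j\}$ for $j$ large. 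Without this argument (or a genuine substitute) the proposal does not prove the lemma. A smaller point: your reason for ignoring non-acceptable parts is not sound --- a $G$ satisfying the condition on a non-acceptable part is not ``vacuously'' harmless, and $U(c)$ in the paper is defined by which parts force $R_{e,i}$, not by acceptability; the paper simply forces $R_{e,i}$ on every part, and nothing in the construction requires recognizing acceptable parts at this stage.
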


\noindent\emph{Proof of Theorem \ref{th2}.}

Given these lemmas, it is easy to see that we can build a sequence of
conditions $c_0,c_1,\ldots$ with the following properties.
\begin{enumerate}

\item Each $c_{s+1}$ extends $c_s$.

\item If $s = \langle e,i \rangle$ then $c_s$ forces $R_{e,i}$.

\item Each $c_s$ has an acceptable part.

\item If part $i$ of $c_s$ is acceptable, then $c_s$ forces
$Q_s$ on part $i$.

\end{enumerate}

Clearly, if part $j$ of $c_{s+1}$ refines part $i$ of $c_s$ and is
acceptable, then part $i$ of $c_s$ is also acceptable. Thus we can
think of the acceptable parts of our conditions as forming a tree
under the refinement relation. This tree is finitely branching and
infinite, so it has an infinite path. In other words, there are
$i_0,i_1,\ldots$ such that for each $s$, part $i_{s+1}$ of $c_{s+1}$
refines part $i_s$ of $c_s$, and part $i_s$ of $c_s$ is
acceptable, which implies that $c_s$ forces $Q_s$ on part $i_s$. Write
$c_s=(k_s,\sigma^s_0,\ldots,\sigma^s_{k_s-1},P_s)$. Let $G=\bigcup_s
\sigma^s_{i_s}$. Let $U_s$ be the class of all $Y$ that satisfy
$(\sigma^s_{i_s},X_{i_s})$ for some $X_0 \oplus \cdots \oplus X_{k_s-1}
\in P_s$. Note that
\begin{itemize}
\item $U_0 \supseteq U_1 \supseteq \cdots$; Since $G\in U_{s+1}\Leftrightarrow (\exists X\in P_{s+1})[G$ satisfies $(\sigma_{i_{s+1}}^s,X_{i_{s+1}})]$ $\Rightarrow (\exists Z\in P_{s})[(\sigma_{i_{s+1}}^{s+1},X_{i_{s+1}})\leq (\sigma_{i_s}^s,Z_{i_s})\wedge$ $G$ satisfies $(\sigma_{i_s}^s,Z_{i_s})]$ $\Leftrightarrow G\in U_s$.
\item Each $U_s$ contains an extension of $\sigma^s_{i_s}$ i.e. $U_s\ne\emptyset$;
\item Each $U_s$ is closed;
\end{itemize}
By compactness of $2^\omega$ $\bigcap\limits_{s=0}^\infty U_s\ne\emptyset$. But clearly
$(\forall Z\in \bigcap\limits_{s=0}^\infty U_s) [Z\succ \sigma^s_{i_s}]$ for all $s$.
Thus $G$ is the unique element of $\bigcap\limits_{s=0}^\infty U_s$. In other words,
$G$ satisfies each $c_s$ on part $i_s$, and hence satisfies all of our requirements.

\section{Proof of Lemma \ref{P-lem1}}

\begin{proof}[Proof of Lemma \ref{P-lem1}.]
It is here that we use the assumption that $A \nleq\sub{T} C$. Let
$c=(k,\sigma_0,\ldots,\sigma_{k-1},P)$ be a condition. Write $P_\tau$ for
the set of all $X \in P$ that extend $\tau$.
\begin{claim}
For each
$\tau=\tau_0\oplus \cdots \oplus \tau_{k-1}$, if $P_\tau \neq \emptyset$
then there is an $X_0 \oplus \cdots \oplus X_{k-1} \in P_\tau$ and an
$i<k$ such that $X_i$ contains elements $m \in A$ and $n \in \overline{A}$
such that $m,n \geq |\tau_i|$.
\end{claim}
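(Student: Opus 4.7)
The plan is to prove the claim by contradiction. Suppose that for every $X=X_0\oplus\cdots\oplus X_{k-1}\in P_\tau$ and every $i<k$, the set $X_i\cap\{m:m\geq|\tau_i|\}$ is contained in $A$ or in $\overline{A}$. I will derive a contradiction with the hypothesis $A\not\leq_T C$ by showing this forces $A\leq_T X$ for every $X\in P_\tau$, then invoking a cone avoidance basis theorem.

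For each $X\in P_\tau$, define $\vec{\epsilon}(X)\in 2^k$ by $\epsilon_i(X)=1$ iff $X_i\cap\{m:m\geq|\tau_i|\}\subseteq A$ (with $\epsilon_i(X)=0$ chosen arbitrarily when that set is empty). To compute $A(n)$ from $X$ for $n\geq\max_{j<k}|\tau_j|$, use $\bigcup_{j<k}X_j=\omega$ to find some $i<k$ with $n\in X_i$; then $n\geq|\tau_i|$, so by the contradiction hypothesis $A(n)=\epsilon_i(X)$. For $n<\max_j|\tau_j|$, the values of $A$ form a finite initial segment that can be hardcoded. Since $k$ is a fixed constant of the condition $c$ and both $\vec{\epsilon}(X)$ and $A\uhr\max_j|\tau_j|$ are finite advice, this yields a genuine Turing reduction $A\leq_T X$ (via an index that may depend on $X$).

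To close the argument, I invoke the Jockusch--Soare cone avoidance basis theorem relative to $C$: since $A\not\leq_T C$ and $P_\tau$ is a nonempty $\Pi^{0,C}_1$ class (being the intersection of the $\Pi^{0,C}_1$ class $P$ with the clopen condition of extending $\tau$), there exists $X\in P_\tau$ with $A\not\leq_T X\oplus C$, and in particular $A\not\leq_T X$. This contradicts the preceding paragraph and completes the proof.

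The main subtlety I anticipate is the justification that advice $\vec{\epsilon}(X)$ whose value genuinely depends on $X$ still yields a bona fide Turing reduction $A\leq_T X$ for each $X$. This is standard but worth spelling out: for each fixed $X\in P_\tau$, the set $A$ equals one of the $2^k$ uniformly $X$-computable candidates $A^{X,\vec{e}}$, $\vec{e}\in 2^k$, namely $A^{X,\vec{\epsilon}(X)}$, and that single equation provides a Turing index $e$ with $A=\Phi_e^X$. The cone avoidance basis theorem requires only this pointwise statement for each $X$ separately, so allowing the index to vary with $X$ is harmless.
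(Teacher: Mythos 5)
Your proof is correct, but it takes a different route from the paper's. Under the contradiction hypothesis you extract, for each individual $X \in P_\tau$, a reduction $A \leq_T X$ (the finite advice $\vec{\epsilon}(X)$ together with $A\uhr \max_j|\tau_j|$ is legitimately absorbed into the index, and the appeal to $\bigcup_j X_j = \omega$ is justified because $P$ is a $k$-partition class), and then you quote the relativized Jockusch--Soare cone avoidance basis theorem to produce some $X \in P_\tau$ with $A \not\leq_T X \oplus C$, a contradiction with $A \not\leq_T C$. The paper instead argues self-containedly: it first extends $\tau$ to a $\nu$ with $P_\nu \neq \emptyset$ chosen so that the labels ``part $i$ lives in $A$'' versus ``part $i$ lives in $\overline{A}$'' become \emph{uniform} across all members of $P_\nu$ (the sets $S_A$ and $S_{\overline{A}}$), and then decides $A(n)$ by a $C$-computable search on the tree for $P_\nu$, pruning nodes whose subtrees are seen to be finite until one of the two candidate sets $L_A$, $L_{\overline{A}}$ empties out; this yields the stronger conclusion $A \leq_T C$ directly. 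So your version is shorter and outsources the compactness work to a classical basis theorem, while the paper's version avoids citing that theorem and in effect reproves exactly the instance of cone avoidance it needs, via the uniformization step (passing from $\tau$ to $\nu$) that your per-path advice $\vec{\epsilon}(X)$ lets you skip. Both are valid proofs of the claim; the underlying mechanism (finite side information plus compactness of the $\Pi^{0,C}_1$ class) is the same, it just sits inside the basis theorem in your argument and is carried out by hand in the paper's.
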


Assuming the claim for now, we build a sequence of strings as follows. Let
$\rho^0$ be the empty string. Given $\rho^s=\rho^s_0\oplus \cdots \oplus
\rho^s_{k-1}$ such that $P_{\rho^s}$ is non-empty, let $X=X_0 \oplus
\cdots \oplus X_{k-1} \in P_{\rho^s}$ and $i_s<k$ be such that $X_{i_s}$
contains elements $m \in A$ and $n \in \overline{A}$ with $m,n \geq
|\rho^s_{i_s}|$. Then there is a $\rho_{s+1}=\rho^{s+1}_0\oplus \cdots
\oplus \rho^{s+1}_{k-1} \prec X$ such that, thinking of strings as finite
sets, $\rho^{s+1}_{i_s} \setminus \rho^s_{i_s}$ contains elements of both
$A$ and $\overline{A}$. Now let $Y =\bigcup_s \rho_s$ and let $i$ be such
that $i=i_s$ for infinitely many $s$. Then $Y \in P$ and $Y$ witnesses the
fact that part $i$ of $c$ is acceptable.

Fix $m$. To obtain the desired $d\leq c$ that forces $Q_m$ on each of its acceptable part.
It is enough to show that for the condition
$c=(k,\sigma_0,\ldots,\sigma_{k-1},P)$, if part $i$ of $c$ is
acceptable, then there is a condition $d_0=(k,\tau_0,\ldots,\tau_{k-1},Q)$
extending $c$ such that $d_0$ forces $Q_m$ on part $i$, where the
extension of $c$ by $d_0$ is witnessed by the identity map. (Note that if
part $i$ of $d_0$ is acceptable, then so is part $i$ of $c$.) Then we can
iterate this process, forcing $Q_m$ on each acceptable part in turn,
to obtain the condition $d$ in the statement of the lemma.

So fix an acceptable part $i$ of $c$. Then there is a $\tau \succ
\sigma_i$ with $|\tau\cap A|\geq m$ and $|\tau\cap \overline{A}|\geq m$, and there is an $X_0 \oplus
\cdots \oplus X_{k-1} \in P$ with $\tau \prec X_i/\sigma_i$. Let $Q =
\{X_0 \oplus \cdots \oplus X_{k-1} \in P : \tau \prec X_i/\sigma_i\}$. Let
$d_0 =(k,\sigma_0,\ldots,\sigma_{i-1},\tau,\sigma_{i+1},\ldots,\sigma_{k-1},Q)$.
Then $d_0$ is an extension of $c$, with the identity function $id:k\rightarrow k$
witness this extension and it clearly forces $Q_m$ on part $i$.

Thus we are left with verifying the claim.
\begin{proof}[Proof of the claim]
Assume for a contradiction
that there is a $\tau=\tau_0 \oplus \cdots \oplus \tau_{k-1}$ such
that $P_\tau \neq \emptyset$ and for every $X_0 \oplus \cdots \oplus
X_{k-1} \in P_\tau$ and every $i<k$, either $X_i \uhr_{\geq
|\tau_i|}{} \subseteq A$ or $X_i \uhr_{\geq |\tau_i|}{} \subseteq
\overline{A}$. It is easy to see that $\tau$ has an extension
$\nu=\nu_0 \oplus \cdots \oplus \nu_{k-1}$ such that $P_\nu \neq
\emptyset$ and for each $i<k$, either $\nu_i(m_i)=1$ for some $m_i
\geq |\tau_i|$ or for every $X_0 \oplus \cdots \oplus X_{k-1} \in
P_\nu$, we have $X_i \uhr_{\geq |\tau_i|}{} = \emptyset$. In the
latter case, let $m_i$ be undefined. Let $S_A$ be the set of all $i<k$
such that $m_i$ is defined and is in $A$, and let $S_{\overline{A}}$
be the set of all $i<k$ such that $m_i$ is defined and is in
$\overline{A}$. If $X_0 \oplus \cdots \oplus X_{k-1} \in P_\nu$, then
$X_i \uhr_{\geq |\tau_i|}{} \subseteq A$ for all $i \in S_A$, and $X_i
\uhr_{\geq |\tau_i|}{} \subseteq \overline{A}$ for all $i \in
S_{\overline{A}}$.

We now claim we can compute $A$ from $C$, contrary to hypothesis.  To
see that this is the case, let $T$ be a $C$-computable tree such that
$P_\nu$ is the set of infinite paths of $T$. For $\rho \in T$, write
$T_\rho$ for the tree of all strings in $T$ compatible with $\rho$.
Suppose we are given $n \geq |\tau|$. Let $j>|\nu|$ be such that for
each $\rho = \rho_0\oplus \cdots \oplus \rho_{k-1} $ of length $j$, we
have $n < |\rho_i|$ for all $i<k$. Let $L_A$ be the set of all $\rho
\in T$ of length $j$ such that $\rho_i(n)=1$ for some $i \in S_A$ and
let $L_{\overline{A}}$ be the set of all $\rho \in T$ of length $j$
such that $\rho_i(n)=1$ for some $i \in S_{\overline{A}}$. If $\rho
\in L_A$ and $T_\rho$ has an infinite path then, by the definition of
$S_A$, we have $n \in A$.  Similarly, if $\rho \in L_{\overline{A}}$
and $T_\rho$ has an infinite path then $n \in \overline{A}$. Thus, if
$\rho \in L_A$ and $\rho' \in L_{\overline{A}}$, then at least one of
$T_\rho$ and $T_{\rho'}$ must be finite. So if we $C$-compute $T$ and
start removing form $L_A$ and $L_{\overline{A}}$ every $\rho$ such
that $T_\rho$ is found to be finite, one of $L_A$ or
$L_{\overline{A}}$ will eventually be empty. They cannot both be empty
because $P_\nu$ is non-empty. If $L_A$ becomes empty, then $n \in
\overline{A}$. If $L_{\overline{A}}$ becomes empty, then $n \in A$.

\end{proof}

\end{proof}

We now turn to the proof of Lemma \ref{R-lem}.

\section{Forcing $R_{e,i}$}
\label{sec5}
\begin{definition}
\label{def1}
\begin{enumerate}
\item $\Phi_e^{\rho\oplus C}$ \emph{disagrees} with a valuation $p$ on a set $X$ iff there is a $Y\subseteq X$ and an $n\in\dom p$, $\Phi_e^{Y/\rho\oplus C}(n)\ne p(n)$;

\item Let $c=(k,\sigma_0,\ldots,\sigma_{k-1},P)$ be a condition,
$p$ be a valuation and $U\subseteq \{0,1,\ldots, k-1\}$. We say that
$c$ \emph{disagrees} with $p$ on $U$ if for every $X_0 \oplus \cdots \oplus
X_{k-1} \in P$ and every $Z_0,Z_1,\ldots, Z_{2k-1}$ with $(\forall l)[ X_l=Z_{2l} \cup Z_{2l+1}]$, there is
a $Y$, a $j\in U(c)$, and an $n \in \dom p$ such that either $\Phi_e^{((Y \cap
Z_{2j})/\sigma_j^A) \oplus C}(n)\!\downarrow \neq p(n)$ or $\Phi_i^{((Y
\cap Z_{2j+1})/\sigma_j^{\overline{A}}) \oplus C}(n)\!\downarrow \neq
p(n)$.

\end{enumerate}
\end{definition}

The following facts illustrate the central idea of the construction.

\begin{fact}
\label{fac1}
For two pairwise incompatible valuations $p_0,p_1$, if $\Phi^\rho$ does not disagree with \emph{both} $p_0,p_1$, on set $X$. Then for any $Y\subseteq X$, $\Phi^{Y/\rho}$ is not total on $\dom p_0\cup \dom p_1$.
\end{fact}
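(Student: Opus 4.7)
The plan is short because the fact is essentially a one-step consequence of unpacking Definition \ref{def1}(1) and applying a pigeonhole argument to the incompatibility of $p_0$ and $p_1$. First, I would translate the hypothesis: the assertion that $\Phi^\rho$ does not disagree with $p_j$ on $X$ (for $j=0,1$) says, by Definition \ref{def1}(1), that for every $Y \subseteq X$ and every $n \in \dom p_j$, convergence $\Phi^{Y/\rho}(n)\!\downarrow$ forces the output to equal $p_j(n)$.

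Second, I would extract a witness of incompatibility from Definition \ref{disag}(3): there is some $n^\ast$ with $n^\ast \in \dom p_0 \cap \dom p_1$ and $p_0(n^\ast) \ne p_1(n^\ast)$ (the intersection is implicit because otherwise the inequality $p_0(n^\ast) \ne p_1(n^\ast)$ is not well-defined).

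The conclusion would then follow by contradiction. Fix an arbitrary $Y \subseteq X$ and suppose toward a contradiction that $\Phi^{Y/\rho}$ is total on $\dom p_0 \cup \dom p_1$. Then in particular $\Phi^{Y/\rho}(n^\ast)\!\downarrow$, and the two translated hypotheses applied at $n^\ast$ force $\Phi^{Y/\rho}(n^\ast) = p_0(n^\ast)$ and $\Phi^{Y/\rho}(n^\ast) = p_1(n^\ast)$ simultaneously, contradicting the choice of $n^\ast$. Hence there is some $n \in \dom p_0 \cup \dom p_1$ on which $\Phi^{Y/\rho}$ diverges.

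There is no real technical obstacle here; the substance of the argument is the combinatorial observation that a $\{0,1\}$-valued functional cannot simultaneously shadow two valuations whose values disagree at a common point. The fact is recorded precisely because this pigeonhole lies behind the choice in Case ii of Section \ref{sec3} to use three pairwise incompatible valuations $p_0,p_1,p_2$: every pair among them provides an $n^\ast$, which is exactly what is needed to block totality of $\Phi^{G \oplus C}$ along each piece of the cross-product partition produced by $Cross(S_0,S_1,S_2;2)$.
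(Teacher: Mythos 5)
Your argument is correct and is exactly the reasoning the paper relies on (the Fact is stated without a separate proof, but the same unwinding of Definition \ref{def1}(1) at a point $n^\ast$ with $p_0(n^\ast)\ne p_1(n^\ast)$ appears in the note of Section \ref{sec3} and again in the verification that $d$ forces $R_{e,i}$ in Section \ref{sec5}). Nothing further is needed.
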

%R( replace fact 3.4)%
\begin{fact}
\label{fac2}
If $\Phi^\rho$ does not disagree with $p$ on a set $X$
then for any $Y\subseteq X$, $\Phi^\rho$ does not disagree with $p$ on set $Y$.
\end{fact}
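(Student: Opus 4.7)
The plan is to argue by direct contrapositive from Definition \ref{def1}(1). Suppose $\Phi^\rho$ disagrees with $p$ on some $Y\subseteq X$. Unpacking the definition, there exist $Z\subseteq Y$ and $n\in\dom p$ with $\Phi^{Z/\rho\oplus C}(n)\!\downarrow\ne p(n)$. Since $Z\subseteq Y\subseteq X$ by transitivity of set inclusion, the very same pair $(Z,n)$ witnesses that $\Phi^\rho$ disagrees with $p$ on $X$. The contrapositive of this implication is exactly the statement of Fact \ref{fac2}.

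There is no genuine obstacle to overcome. The notion of disagreement in Definition \ref{def1}(1) is governed by an existential quantifier over subsets of the ambient set, so shrinking the ambient set can only shrink the range of the existential and therefore can only make disagreement harder to witness, never easier. In particular, no new case analysis, no appeal to the $\Pi_1^0$ structure of $P$, and no use of the hypothesis $C\not\gg 0$ is required; the fact is pure definitional monotonicity. I expect it will be used downstream, together with Fact \ref{fac1}, to propagate non-disagreement along refinements when constructing a successor condition that forces $R_{e,i}$: if a component $X_j$ of a $k$-partition avoids disagreement with $p$, then every $Y_j\subseteq X_j$ arising from a later $Cross$ construction inherits the same avoidance automatically, which is what keeps the $\Pi_1^0$ classes $S_i$ from collapsing when we pass to extensions.
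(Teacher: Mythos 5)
Your proof is correct and is exactly the intended argument: the paper states Fact \ref{fac2} without proof precisely because it is this immediate definitional monotonicity, and your contrapositive via transitivity of $\subseteq$ (the witness $Z\subseteq Y\subseteq X$ carries over) is the same reasoning the paper leaves implicit.
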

Therefore,
\begin{fact}
\label{fac6}
For two incompatible valuations $p_0$, $p_1$. If $\Phi^\rho$ does not disagree with $p_0$ on a set $X_0$, and does not disagree with
$p_1$ on a set $X_1$ then for any $Y\subseteq X_0\cap X_1$, $\Phi^{Y/\rho}$ is not total on
$\dom p_0\cup\dom p_1$.
\end{fact}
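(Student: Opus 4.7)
The plan is to deduce Fact \ref{fac6} directly from Facts \ref{fac1} and \ref{fac2}, as the word ``Therefore'' preceding the statement indicates that it is meant as an immediate corollary. The idea is to push the two separately stated non-disagreement hypotheses down to the common subset $X_0 \cap X_1$ using Fact \ref{fac2}, and then apply Fact \ref{fac1} on that smaller set.

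First I would fix an arbitrary $Y \subseteq X_0 \cap X_1$. Since $Y \subseteq X_0$ and $\Phi^\rho$ does not disagree with $p_0$ on $X_0$, I would invoke Fact \ref{fac2} to conclude that $\Phi^\rho$ does not disagree with $p_0$ on $Y$; by the symmetric argument with $X_1$ in place of $X_0$ and $p_1$ in place of $p_0$, $\Phi^\rho$ does not disagree with $p_1$ on $Y$ either. At this point the hypothesis of Fact \ref{fac1} is satisfied with its ambient set $X$ instantiated as $Y$, so applying Fact \ref{fac1} to the subset $Y \subseteq Y$ yields exactly that $\Phi^{Y/\rho}$ is not total on $\dom p_0 \cup \dom p_1$, as required.

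There is no genuine obstacle here; the fact is a one-line bookkeeping step. The only point worth flagging is that ``not disagree with $p$ on $X$'' is defined by a universal quantifier over all subsets of $X$, which is what makes Fact \ref{fac2} a trivial downward-closure statement and what makes the intersection $X_0 \cap X_1$ the natural common set on which the two hypotheses can be merged and then fed into Fact \ref{fac1}.
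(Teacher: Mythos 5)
Your proof is correct and matches the paper's intent exactly: the paper gives no separate argument for Fact \ref{fac6}, and the ``Therefore'' signals precisely this combination of Fact \ref{fac2} (restricting both non-disagreement hypotheses to a common subset) followed by Fact \ref{fac1}. Whether one applies Fact \ref{fac1} with ambient set $Y$ (as you do) or with $X_0\cap X_1$ is an immaterial difference.
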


The following lemma tells how to ensure that the tree of each condition is an ordered partition tree.
\begin{lemma}
\label{lem1}
For any $n$ many ordered $2k-$partitions of $\omega$, namely $X^{0}$, $X^{1}$,$\ldots$, $X^{n-1}$, if $n > 2k$ then
$Cross(X^{0},X^1,\ldots, X^{n-1};2)$ is a $2k \binom{n}{2}$-partition. Therefore if $S_0,S_2,\ldots, S_{n-1}$ are $n$
classes of ordered $2k$-partitions of $\omega$ then \\
$Cross(S_0,S_1,\ldots, S_n;2)$ is a class of
$2k\binom{n}{2}$-partition of $\omega$.
\end{lemma}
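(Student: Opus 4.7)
The plan is to reduce the statement entirely to the pigeonhole principle; the hypothesis $n>2k$ is present precisely so pigeonhole applies. First I would unpack the definition of $Cross$: an element of $Cross(X^{0},\ldots,X^{n-1};2)$ is indexed by triples $(j,p,q)$ with $j<2k$ and $p<q\le n-1$, giving exactly $2k\binom{n}{2}$ indexed components, each of the form $Y^{(p,q)}_j=X^{p}_j\cap X^{q}_j$. Since each component sits at a prescribed coordinate of the $\bigoplus$, the only content left is to verify that the union $\bigcup_{j,p<q}Y^{(p,q)}_j$ is all of $\omega$.

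Fix an arbitrary $x\in\omega$. For each $l<n$, because $X^{l}=X^{l}_0\oplus\cdots\oplus X^{l}_{2k-1}$ is an ordered $2k$-partition (so $\bigcup_{j<2k}X^{l}_j=\omega$), we may pick some index $c_l(x)<2k$ with $x\in X^{l}_{c_l(x)}$. This yields a function $l\mapsto c_l(x)$ from $\{0,1,\ldots,n-1\}$ into $\{0,1,\ldots,2k-1\}$. Since $n>2k$, the pigeonhole principle produces $p<q\le n-1$ with $c_p(x)=c_q(x)$; letting $j$ denote this common value, we obtain $x\in X^{p}_j\cap X^{q}_j=Y^{(p,q)}_j$, so $x$ lies in the $(j,p,q)$ component of the cross. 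Hence $Cross(X^{0},\ldots,X^{n-1};2)$ is a $2k\binom{n}{2}$-partition, as claimed. This is also the reason why, in Section \ref{sec3} where $2k=2$, the construction picks exactly \emph{three} pairwise incompatible valuations: one needs $n=3>2k$ to force the pigeonhole overlap.

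For the class-level statement, any $Y\in Cross(S_0,\ldots,S_{n-1};2)$ is by definition of the form $Cross(X^{0},\ldots,X^{n-1};2)$ for some choice of representatives $X^{i}\in S_i$, so applying the first part to each such choice yields that $Y$ is itself a $2k\binom{n}{2}$-partition of $\omega$; hence the whole class consists of such partitions. I do not expect a genuine obstacle here, the entire argument is one invocation of pigeonhole; the only care needed is the bookkeeping that the indexing triples $(j,p,q)$ really do enumerate $2k\binom{n}{2}$ coordinates of the direct sum.
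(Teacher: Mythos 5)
Your proof is correct and takes essentially the same route as the paper's: fix $x\in\omega$, use the fact that each ordered $2k$-partition covers $\omega$, and apply the pigeonhole principle with $n>2k$ to find $p\ne q$ and a common index $j$ with $x\in X^{p}_j\cap X^{q}_j$ (the paper phrases this via the sets $F_i=\{p: x\in X^p_i\}$ rather than a choice function, which is the same argument). The class-level statement is handled identically, by applying the pointwise result to each choice of representatives.
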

\begin{proof}
Straightforward by pigeonhole principle. It suffices to show that for any $x\in \omega$, there is some $i\leq
2k-1$, some $X^{p},X^{q},p\ne q$, such that $X^p = \bigoplus_{i=0}^{2k-1} X_i^p$, $X^q = \bigoplus_{i=0}^{2k-1} X_i^q$, $x\in X_i^p\cap X_i^q$. For $i=0,2,\ldots, 2k-1$ let
$F_{i}=\{p\leq n-1:x\in X^p_i\}$. Since each $X^{p}$ is an ordered partition, therefore for each $p$
there exists some $i$ such that $p\in F_{i}$. So $\bigcup\limits_{i=0}^{2k-1} F_{i}=\{0,1,2,\ldots, n-1\}$. But $n>2k$ thus
there is some $i\leq 2k-1$ such that $F_{i}$ contains two elements say $p,q$, thus $x\in X^p_i\cap
X^q_i$.
\end{proof}

\subsection{Construction}
\label{subsec-con}
Fix $e,i$ and a condition $c=(k,\sigma_0,\ldots,\sigma_{k-1},P)$. For any
condition $d$, let $U(d)$ be the set of all $j$ such that part $j$ of $d$
does not force $R_{e,i}$ on part $j$. If $U(d)=\emptyset$ then there is
nothing to prove, so we assume $U(d) \neq \emptyset$. It is clearly enough
to obtain a condition $d$ extending $c$ such that $|U(d)|<|U(c)|$. Then one
could simply iterate this process. Here and below, we
write $\sigma^A$ for the string of the same length as $\sigma$ defined
by $\sigma^A(n)=1$ if{}f $\sigma(n)=1\wedge n \in A$, and
similarly for $\sigma^{\overline{A}}$.

We will use two ways to extend conditions.

\medskip

\emph{Begin construction:}

\medskip

\textbf{Case i.} $c$ disagrees with some correct valuation $p$ on $ U(c)$.

Let $X_0 \oplus \cdots \oplus X_{k-1} \in P$. For $j=0,1,\ldots, k-1$ let $Z_{2j}=X_j \cap
A$ and $Z_{2j+1}=X_j \cap \overline{A}$. By the definition of disagreeing with
a correct valuation on $U(c)$, there exists a $j\in U(c)$, an $n\in \dom p$ and a $Y$
 such that either $\Phi_e^{((Y \cap
Z_{2j})/\sigma_j^A) \oplus C}(n)\!\downarrow = \Phi_n(n)\!\downarrow$ or
$\Phi_i^{((Y \cap Z_{2j+1})/\sigma_j^{\overline{A}}) \oplus C}(n)\!\downarrow
= \Phi_{n}(n)\!\downarrow$. In other words, either $\Phi_e^{(Y/\sigma_j \cap A)
\oplus C}(n)\!\downarrow = \Phi_n(n)\!\downarrow$ or $\Phi_i^{(Y/\sigma_j \cap
\overline{A}) \oplus C}(n)\!\downarrow = \Phi_n(n)\!\downarrow$.

If $\tau$ is a sufficiently long initial segment of $Y$, then for every
$Z$ extending $\tau$, we have either $\Phi_e^{(Z \cap A) \oplus
C}(n)\!\downarrow = \Phi_n(n)\!\downarrow$ or $\Phi_i^{(Z \cap
\overline{A}) \oplus C}(n)\!\downarrow = \Phi_n(n)\!\downarrow$. We may
assume that $\tau \succeq \sigma_j$. Let $Q$ be the class of all $W_0
\oplus \cdots \oplus W_{k-1} \in P$ such that $\tau$, thought of as a
finite set, is a subset of $W_j/\sigma_j$ and let
$d=(k,\sigma_0,\ldots,\sigma_{j-1},\tau,\sigma_{j+1},\ldots,\sigma_{k-1},Q)$. Note that $Q$
is a non-empty $\Pi_1^{0,C}$ class since it contains $X_0 \oplus \cdots \oplus X_{k-1}$.
Clearly $d$ is an extension of $c$, with the identity function $id:k\rightarrow k$
 witnessing this extension relation, and clearly $d$ forces $R_{e,i}$ on part $j$, so that
$|U(d)|<|U(c)|$.

\medskip

\textbf{Case ii.} There are pairwise incompatible valuations $p_0,\ldots,p_{2k}$ such
that $c$ does not disagree with any $p_l$ on $U(c)$. We will show in Lemma \ref{altlem}
that these are the only two cases that will occur.

For each $l<2k$ let $S_l$ be the class of all sets of the form $Z_0 \oplus \cdots \oplus
Z_{2k-1}$ such that $(Z_0 \cup Z_1) \oplus (Z_2 \cup Z_3) \oplus \cdots \oplus (Z_{2k-2}
\cup Z_{2k-1}) \in P$ and for all $j\in U(c)$, every $n\in \dom p_l$, every $Y$
 we have, neither
$\Phi_e^{(Y\cap Z_{2j})/\sigma_j^A \oplus C}(n)\!\downarrow \neq p_l(n)$
nor
$\Phi_i^{(Y\cap Z_{2j+1})/\sigma_j^{\overline{A}} \oplus C}(n)\!\downarrow \neq
p_l(n)$.

Since $c$ does not disagree with any
of the $p_l$ on $U(c)$, all $S_l$ are non-empty.  It is then easy to see that each
$S_l$ is in fact a $\Pi^{0,C}_1$ $2k$-partition class.

Let $Q=Cross( S_0,\ldots,S_{2k};2)$ and
let $$d=\left(2k\binom{2k+1}{2}, \sigma_0,\ldots,\sigma_0,
\sigma_1,\ldots,\sigma_1,
\ldots,\sigma_{k-1},\ldots,\sigma_{k-1},Q\right),$$ where each
$\sigma_i$ appears $2\binom{2k+1}{2}$ many times. We show that $d$ is a condition
extending $c$, and $d$ forces $R_{e,i}$.

    \begin{enumerate}
    \item Since each $S_i$ is non-empty therefore $Q$ is non-empty. Furthermore, since
        each $S_i$ is a $\Pi_1^{0,C}$ class then $Q$ is also a
        $\Pi_1^{0,C}$ class. Because $Cross$, when applied to strings, is computable
        therefore by applying $Cross$ to the $2k+1$ computable trees $T_{i}$ with
        $[T_{i}]=S_i$ one obtains a computable tree $T$ with
        $[T]=Q$.

   \item $Q$ is a class of ordered $2k\binom{2k+1}{2}$-partitions of $\omega$. To see this, note that
       $S_i$, $i\leq 2k$, are $2k+1$ classes of ordered 2k-partitions of $\omega$, by Lemma
       \ref{lem1} $Q$ is a class of ordered $2k\binom{2k+1}{2}$-partitions of $\omega$. Therefore combine with item
       1 and recall the fact that the initial segments in $d$ are not changed, it
       follows that $d$ \emph{is} a condition.

    \item For each new part $i'$ of $d$ and every $ W_0\oplus W_1\oplus\cdots\oplus W_{k'-1}\in Q$, where $k'=2k\binom{2k+1}{2}$, there exists $X_0\oplus X_1\oplus\cdots\oplus X_{k-1}\in P$, and $i\leq k-1$ with $W_{i'}/\sigma_{i'}\subseteq
        X_{i}/\sigma_{i}$, and $\sigma_i=\sigma_{i'}$, i.e. each new part is contained in an old part of some path through $P$. It follows
        that $d$ extends $c$. To see this, note that by definition of $P$ for
        each $i'\leq k'-1$ there exist $p,q\leq 2k,p\ne q$ and $j\leq 2k-1$ determined by $i'$, such that $(\forall W\in Q)( \exists X^{p}\in
        S_p\ \exists X^{q}\in S_q)$
        $[W_{i'}=X_{j}^p\cap X_{j}^q]$. Furthermore, by definition
        of $S_p$,
     $X^p_{j}\cup X^{p}_{j'}= X_i$ for some  $j'\leq 2k-1$, and some $X=X_0\oplus X_1\oplus\cdots\oplus X_{k-1}\in P$.
    Therefore
    $$W_{i'}=X^p_{j}\cap X^q_j\subseteq X^p_j\subseteq X^p_j\cup X^p_{j'}=X_i$$ i.e. each part $i'$ of each $W\in Q$ is contained in some part $i$ of some $X\in P$.

    \item $d$ forces
$R_{e,i}$. To see this, let $G$ satisfy $d$. Then there is some $j<k$, some $a \neq b<2k+1$, some
$Z_0 \oplus \cdots \oplus Z_{2k-1} \in S_a$, and some $W_0 \oplus \cdots
\oplus W_{2k-1} \in S_b$ such that $G$ satisfies one of the Mathias
conditions $(\sigma_j,Z_{2j} \cap W_{2j})$ or $(\sigma_j,Z_{2j+1} \cap
W_{2j+1})$. Then $G$ satisfies $c$ on part $j$, so if $j \notin U(c)$,
then $G$ satisfies $R_{e,i}$. So assume $j \in U(c)$.

Let us suppose $G$ satisfies $(\sigma_j,Z_{2j} \cap W_{2j})$, the other
case being similar. Then $(G \cap A)/\sigma_j$ satisfies both of the
Mathias conditions $(\sigma_j,Z_{2j})$ and $(\sigma_j,W_{2j})$. Let $n$ be
such that $p_a(n) \neq p_b(n)$. By the definitions of $S_a$ and $S_b$, we
have $\neg(\Phi_e^{(G \cap A) \oplus C}(n)\!\downarrow \neq p_a(n))$ and
$\neg(\Phi_e^{(G \cap A)\oplus C}(n)\!\downarrow \neq p_b(n))$. Hence we
must have $\Phi_e^{(G \cap A) \oplus C}(n)\!\uparrow$. Thus $d$ forces
$R_{e,i}$.
\end{enumerate}
\medskip
   \emph{End of construction}
   \medskip

It remains to prove that
   \begin{lemma}
\label{altlem}
For a valuation $p$, let $S_p$ be the $\Pi_1^{0,C}$ class of all $Z_0\oplus\cdots\oplus Z_{2k-1}$ with $Z_{0}\cup Z_{1}\oplus\cdots\oplus Z_{2k-2}\cup Z_{2k-1}\in P$ such that for every $j\in U(c)$, every $\mu\in 2^{\omega}$, and every $n \in \dom p$,
\begin{itemize}
\item neither
$\Phi_e^{((\mu\cap Z_{2j})/\sigma^A_j) \oplus C}(n)[|\mu|]\!\downarrow \neq
p(n)$,
\item nor $\Phi_i^{((\mu\cap Z_{2j+1})/\sigma^{\overline{A}}_j) \oplus
C}(n)[|\mu|]\!\downarrow \neq p(n)$.
\end{itemize}

One of the following must hold.
\begin{enumerate}

\item There is a correct valuation $p$ such that $S_p$ is empty i.e. $c$ disagrees with the correct $p$ on $U(c)$.

\item There are pairwise incompatible valuations $p_0,\ldots,p_{2k}$ such
that $S_p$ is not empty i.e. $c$ does not disagree with $p_l$ on $U(c)$ for all $l\leq 2k$.

\end{enumerate}
\end{lemma}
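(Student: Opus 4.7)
The plan is to proceed by contradiction. Assume both alternatives fail, and aim to contradict the standing hypothesis $C \not\gg 0$ by producing a $2$-DNR computable from $C$. Write $V := \{p \text{ a valuation} : S_p \neq \emptyset\}$. The failure of (1) says every correct valuation lies in $V$; the failure of (2) says $V$ contains no $2k+1$ pairwise incompatible members.

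First I would record the structural features of $V$. Because $S_p$ is uniformly $\Pi_1^{0,C}$ in the finite object $p$, the set $V$ is $\Pi_1^{0,C}$, and because $q \subseteq p$ forces $S_p \subseteq S_q$, $V$ is closed under restriction of valuations. The set $V' := V \cap 2^{<\omega}$ of initial valuations in $V$ is therefore a $\Pi_1^{0,C}$ subtree of $2^{<\omega}$, and the antichain bound gives $|V' \cap 2^n| \leq 2k$ for every $n$, since any two distinct length-$n$ strings are automatically pairwise incompatible as valuations. Using the $\Sigma_1^{0,C}$ enumeration of $2^{<\omega} \setminus V$ (closed under extending strings, dually to the downward closure of $V$), I would replace $V'$ by a $C$-computable super-tree $T$ of the same width: let $g(n)$ be the first stage at which at least $2^n - 2k$ length-$n$ strings have been enumerated into $V^c$, replace $g$ by its monotone hull $\tilde g$, and set $T := \{\sigma : \sigma \notin V^c_{\tilde g(|\sigma|)}\}$. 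Then $T$ is a $C$-computable binary tree with $|T \cap 2^n| \leq 2k$ and $[V'] \subseteq [T]$. Since $[T]$ is a $\Pi_1^{0,C}$ class with at most finitely many members, each of which is isolated, every member of $[T]$ is $\leq_T C$.

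The crux, and the main obstacle, is to show that $[V']$ contains a $2$-DNR. By compactness of the $\Pi_1^{0,C}$ class $\mathcal{R} := [V'] \cap \{f : f(m) \neq \Phi_m(m) \text{ whenever } \Phi_m(m)\!\downarrow\}$, it suffices to prove that for every $n$ some length-$n$ initial valuation $\sigma \in V$ satisfies $\sigma(m) = 1 - \Phi_m(m)$ for every $m < n$ with $\Phi_m(m)\!\downarrow$. The failure of (1) places the correct valuation $p^*_n := p^* \upharpoonright (K \cap [0, n))$ in $V$, where $K = \{m : \Phi_m(m)\!\downarrow\}$ and $p^*(m) := 1 - \Phi_m(m)$ for $m \in K$. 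I would then try to extend $p^*_n$ coordinate-by-coordinate through $[0, n) \setminus K$ while staying in $V$: at any intermediate $p \supseteq p^*_n$ in $V$, if both one-step extensions $p \cup \{(m, 0)\}$ and $p \cup \{(m, 1)\}$ left $V$ for some new $m$, then every witness $X \in S_p$ would have the relevant operator converging at $m$ to both $0$ and $1$ on suitable $Y$'s; harvesting these split disagreements across enough $m \in [0, n) \setminus K$ should manufacture $2k+1$ pairwise incompatible members of $V$, contradicting the failure of (2).

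With $\mathcal{R} \neq \emptyset$ secured, the $2$-DNR it furnishes lies in $[V'] \subseteq [T]$ and is therefore $\leq_T C$, contradicting $C \not\gg 0$ and establishing the dichotomy.
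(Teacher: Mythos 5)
Your overall route is genuinely different from the paper's, and its skeleton is sound: assuming both alternatives fail, the set $V=\{p: S_p\neq\emptyset\}$ is closed under restriction of valuations and is $\Pi_1^{0,C}$ (emptiness of $S_p$ is a $\Sigma_1^{0,C}$ event), the failure of (2) bounds the number of length-$n$ strings in $V$ by $2k$, so the initial-valuation tree $V'$ is a width-$\leq 2k$ tree whose complement is $C$-c.e.; its path class is a finite $\Pi_1^{0,C}$ class, hence every path is $\leq_T C$, and any DNR path contradicts $C\not\gg 0$. (The paper argues differently: it enumerates the $C$-c.e.\ set $E$ of valuations $p$ with $S_p=\emptyset$, introduces the auxiliary family $S$ of finite sets with the witness sequence $n_0,n_1,\ldots$, and either converts $\emptyset\in S$ or $\{n_0,\ldots,n_j\}\in S$ into a $C$-computable diagonally nonrecursive function, or else extracts the $2^{j+1}$ pairwise incompatible valuations with domain $\{n_0,\ldots,n_j\}$, none of which lie in $E$.)

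However, at the step you yourself call the crux there is a genuine gap. You start from the correct valuation restricted to $K\cap[0,n)$ and propose to extend it coordinate-by-coordinate through $[0,n)\setminus K$ inside $V$, claiming that if both one-bit extensions of some $p\in V$ at a new coordinate $m$ leave $V$, then ``harvesting these split disagreements'' manufactures $2k+1$ pairwise incompatible members of $V$. No argument is given for this, and I do not see one: from $S_{p\cup\{(m,0)\}}=S_{p\cup\{(m,1)\}}=\emptyset$ one only learns that for every $Z\in S_p$ the functionals converge at $m$ to both values on suitable $\mu$ and parts of $Z$; this produces convergent computations (disagreements), not new valuations $q$ with $S_q\neq\emptyset$, so it gives no contradiction with the failure of (2). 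Fortunately the whole detour is unnecessary and stems from reading ``correct'' too restrictively: by Definition \ref{disag}, a correct valuation only has to avoid $\Phi_m(m)$ at those $m\in\dom p$ with $\Phi_m(m)\!\downarrow$, and may take arbitrary values elsewhere. Hence the total string $\sigma$ of length $n$ with $\sigma(m)=1-\Phi_m(m)$ for $m\in K\cap[0,n)$ and $\sigma(m)=0$ otherwise is itself a correct valuation, and the failure of (1) places it directly in $V$; that is exactly the non-diagonal length-$n$ string your compactness step needs. With this one-line replacement your argument is complete, and is arguably more streamlined than the paper's construction.
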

\begin{proof}[Proof of Lemma \ref{altlem}]
We note that item 1 and item 2 are equivalent to case i and case ii respectively. Furthermore $S_p$ is a  $\Pi_1^{0,C}$ class uniformly in $p$. Consequently for each $j<k$, the set of all valuations $p$ such that $c$
disagrees with $p$ on $U(c)$ is $C$-c.e. Let $E$ denote this $C$-c.e. set of valuations.

Assume that alternative 1 above does not hold. Since $C$ does not have \textrm{PA}-degree,
 there is no $C$-computable function $h$ such that if
$\Phi_n(n)\!\downarrow$ then $h(n)\neq\Phi_n(n)$.

Let $S$ be the collection of all finite sets $F$ such that for each $n
\notin F$, either $\Phi_n(n)\!\downarrow$ or there is a $p \in E$ such
that $F \cup \{n\} \subseteq \dom p$ and for every $m \in \dom p \setminus
F \cup \{n\}$, we have $p(m)\neq \Phi_m(m)\!\downarrow$. If $F \notin S$, then
there is at least one $n \notin F$ for which the above does not hold. We
say that any such $n$ \emph{witnesses} that $F \notin S$.

First suppose that $\emptyset \in S$. Then for each $n$, either
$\Phi_{n}(n)\!\downarrow$ or there is a $p \in E$ such that $n \in
\dom p$ and for every $m \neq n$ in $\dom p$, we have
$p(m)\neq\Phi_m(m)\!\downarrow$. Then we can define $h \leq\sub{T} C$ by
waiting until either $\Phi_n(n)\!\downarrow$, in which case we let
$h(n)=1-\Phi_n(n)$, or a $p$ as above enters $E$, in which case we let
$h(n)=1-p(n)$. Since no element of $E$ is correct, in the latter case,
if $\Phi_n(n)\!\downarrow$ then $p(n) = \Phi_n(n)$, so
$h(n)=\Phi_n(n)$. Since $C$ does not have \textrm{PA}-degree, this case cannot
occur.

Thus $\emptyset \notin S$. Let $n_0$ witness this fact.
Given $n_0,\ldots,n_j$, if $\{n_0,\ldots,n_j\} \notin S$, then let
$n_{j+1}$ witness this fact. Note that if $n_j$ is defined then
$\Phi_{n_j}(n_j)\!\uparrow$.

Suppose that for some $j$, we have $\{n_0,\ldots,n_j\} \in S$. Then
$\{n_0,\ldots,n_{j-1}\} \notin S$, as otherwise $n_j$ would not be
defined. We define $h \leq\sub{T} C$ as follows. First, let $h(n_l)=0$
for $l \leq j$.  Given $n \notin \{n_0,\ldots,n_j\}$, we wait until
either $\Phi_n(n)\!\downarrow$, in which case we let $h(n)=1-\Phi_n(n)$,
or a $p$ enters $E$ such that $\{n_0,\ldots,n_j,n\} \subseteq \dom p$
and for every $m \in \dom p \setminus \{n_0,\ldots,n_j,n\}$, we have
$p(m)\ne\Phi_m(m)\!\downarrow$. If $\Phi_n(n)\!\uparrow$ then the latter
case must occur, since $\{n_0,\ldots,n_j\} \in S$. In this case, we
cannot have $p(n)\neq\Phi_n(n)\!\downarrow$, as then $p$ would be a
counterexample to the fact that $n_j$ witnesses that
$\{n_0,\ldots,n_{j-1}\} \notin S$. Thus we can let
$h(n)=1-p(n)$. Again, since $C$ does not have \textrm{PA}-degree, this case
cannot occur.

Thus $\{n_0,\ldots,n_j\} \notin S$ for all $j$. There are $2^{j+1}$
many valuations with domain $\{n_0,\ldots,n_j\}$, and they are all
pairwise incompatible. None of these valuations can be in $E$, as that
would contradict the fact that $n_j$ witnesses that
$\{n_0,\ldots,n_{j-1}\} \notin S$. Taking $j$ large enough, we have
$2k+1$ many pairwise incompatible valuations, none of which are in $E$.
\end{proof}

\bibliographystyle{plain}

\bibliography{RT2}

\end{document}